\documentclass[11pt]{article}
\usepackage{amsmath}
\usepackage{amsfonts}
\usepackage{amssymb}
\usepackage[mathscr]{eucal}
\setcounter{MaxMatrixCols}{10}
\textwidth16.3cm
\textheight22.cm
\addtolength{\oddsidemargin}{-2.1cm}
\addtolength{\topmargin}{-1.6cm}         \setlength{\jot}{13pt}    \setlength{\parskip}{1ex} \newif{\ifcomentarios}\comentariosfalse\newtheorem{theorem}{Theorem}
\newtheorem{teo}[theorem]{Theorem}

\newtheorem{lemma}[theorem]{Lemma}
\newtheorem{remark}[theorem]{Remark}

\newtheorem{proposition}[theorem]{Proposition}
\newenvironment{proof}[1][Proof]{\noindent\textit{#1.}}{\hfill $\Box$}

\newcommand{\ud}{\mathrm{d}}
\newcommand{\N}{\mathbb{N}}
\newcommand{\Z}{\mathbb{Z}}
\newcommand{\R}{\mathbb{R}}
\newcommand{\C}{\mathbb{C}}
\newcommand{\e}{\epsilon}
\newcommand{\dis}{\displaystyle}
\newcommand{\pt}{\partial}
\newcommand{\ii}{\mathrel{\mathop:}}
\newcommand{\LL}{\mathrm L}
\newcommand{\CC}{\mathrm C}

\hyphenation{man-u-script}

\begin{document}\title{On the spectrum of a Robin Laplacian in a planar waveguide}
\author{Alex F. Rossini}
\date{}\maketitle

\abstract{We consider the Laplace operator in a planar waveguide, i.e., an infinite two-dimensional straight strip of constant width,  with particular types of Robin boundary conditions. We study the essential spectrum of the corresponding Laplacian when the boundary coupling function has a limit at infinity. Furthermore, we derive sufficient conditions for the existence of discrete spectrum.

\

\noindent {\bf MSC (2010):} {47F05 (primary);  47B25, 81Q05 (secondary).}

\

\noindent {\bf Running-head title:} Robin planar waveguide.

\

\noindent {\bf Keywords:} planar waveguides, discrete spectrum, Robin boundary conditions. 

\section{Introduction}
There are different ways of confining a quantum particle in long and thin structures, the so-called quantum waveguides in suitable subsets $\Omega_\e$ of the space~$\mathbb R^3$ or the plane~$\mathbb R^2$  \cite{CA,FS,FSR,GJ,DK,KK}.  A usual possibility, in two dimensions, is to model the waveguide by a curved strip of constant width which is squeezed between two curves; in this region one considers the Laplacian subject to Dirichlet~\cite{CA,DE,FS}, Neumann~\cite{ES,DK} or Robin boundary conditions~\cite{CAL,FK,MJ}.  
\par Our main interest in this paper is to describe the precise location of the essential spectrum of a  Robin Laplacian $-\Delta_{\alpha}^{\Omega_\e}$, and  study the existence of eigenvalues below the essential spectrum, in a straight quantum waveguide $\Omega_\e$; see \cite{CDFK,FK,MJ,DZ} for related  references. 

\par The description of the model here studied is as follows. Given a positive number~$\e,$ consider the infinite straight strip $\Omega_\e= \R\times I,$ where $I=(0,\e)$ is a bounded interval. The operator $-\Delta_{\alpha}^{\Omega_\e}$  acts as the Laplacian in the Hilbert space  $\LL^{2}(\Omega_\e)$ with certain Robin conditions at the boundary $\pt\Omega_{\e}.$ More specifically, given a bounded real-valued function~$\alpha(x)$ on~$\R,$ the classical version of such conditions are  
\begin{equation}\label{R1}
\left \{
\begin{array}{cc}
-\dis\frac{\pt \psi}{\pt y}(x,0)- \alpha(x) \psi(x,0)=0\\ 
\,\,\,\,\dis\frac{\pt \psi}{\pt y}(x,\epsilon)+  \alpha(x) \psi(x,\epsilon)=0
\end{array}
\right.,
\end{equation}
for each $x\in\R$ and each $\psi\in\mathrm{dom}\,(-\Delta_{\alpha}^{\Omega_\e})$. A related type of boundary conditions has been considered in~\cite{MJ}; there the author has investigated spectral properties of the Laplacian by imposing (usual) Robin conditions, i.e.,  without changing the sign of the Robin parameter~$\alpha$ as in~\eqref{R1}, whose classical version are
\begin{equation}\label{R2}
\left \{
\begin{array}{cc}
-\dis\frac{\pt \psi}{\pt y}(x,0)+\alpha(x) \psi(x,0)=0\\ 
\,\,\,\,\dis\frac{\pt \psi}{\pt y}(x,\epsilon)+  \alpha(x) \psi(x,\epsilon)=0
\end{array}
\right.,
\end{equation}
where $\alpha(x)$ is positive for all $x\in\R.$
Considering this case and under the hypothesis that~$\alpha$ tends to a constant at infinity, the essential spectrum of the Laplacian was determined and  a sufficient condition for the existence of discrete spectrum was given. The strategy  in~\cite{MJ} to prove the existence of  at least one isolated eigenvalue, below the threshold of the essential spectrum, was  a variational one based on~\cite{DE}, and the method of Neumann Bracketing was employed to find the location of the essential spectrum. 

\par It is a question here whether there are any similar results when one chooses our boundary conditions~\eqref{R1}. This change of sign of the Robin parameter leads to nonpositive quadratic forms and, in this context, we were able to get similar results as in~\cite{MJ}. Note that here the location of the essential spectrum was obtained by means the an idea in~\cite{BK}.

\par Let $-\Delta_{\alpha}^{\Omega_\e}$  denote the Laplacian  with $\mathrm{dom}\,(-\Delta_{\alpha}^{\Omega_{\epsilon}})=\{\psi \in H^{2}(\Omega_\epsilon)\,;\psi\,\,\mbox{satisfies}\,\,(\ref{R1})\}.$ Note that $-\alpha_{0}^{2}$ is the first eigenvalue of the Laplacian $-\Delta_{\alpha_{0}}^{I}$ in $\LL^{2}(I)$ with  $\psi(x)\in\mathrm{dom}(-\Delta_{\alpha_{0}}^{I})$ if $\phi\in  H^{2}(I)$ and satisfying 
\begin{equation}\label{R1R}
\left \{
\begin{array}{cc}
-\psi'(0)-\alpha_{0}\psi(0)=0\\ 
\,\,\,\,\psi'(\e)+\alpha_{0} \psi(\e)=0
\end{array}
\right..
\end{equation}
\par We are going to show that, under some conditions as $\lim_{|x|\to+\infty}(\alpha-\alpha_0)=0$,
$$ \sigma_{\mathrm{ess}}(-\Delta_{\alpha}^{\Omega_\e})=\sigma_{\mathrm{ess}}(-\Delta_{\alpha_0}^{\Omega_\e})=[-\alpha_{0}^{2},+\infty) $$ and
$$\sigma(-\Delta_{\alpha}^{\Omega_{\e}})\cap (-\infty, -\alpha_{0}^{2})\neq\emptyset.$$ 
\par The  operators are introduced as the unique self-adjoint operators associated with appropriate quadratic forms and the boundary conditions should be understood in the sense of traces (see more details in Sections~\ref{LR} and~\ref{LRF}). 
\par The paper is organized as follow. In Section~\ref{LR} we introduce a Robin Laplacian in a bounded interval (transversal section), show that its essential spectrum is empty and explicitly compute  its eigenvalues. The change of sign of the Robin parameter~$\alpha$ leads to a negative first eigenvalue  (and equal to~$-\alpha^2 $), whereas all the others are positive. In Section~\ref{LRF} we pass to the corresponding study in an infinite straight and narrow strip. We show, via quadratic forms, that the operator $-\Delta_{\alpha}^{\Omega_\e}$ is self-adjoint (Theorem~\ref{alpha}). Finally, in Section~\ref{discrete},  we find the essential spectrum of such Robin Laplacian operator, and gives  sufficient conditions for the existence of discrete spectrum.

\section{Transversal Robin Laplacian}\label{LR}

Initially some results will be presented to our Robin Laplacian in the interval (transversal section) $I=(0,\e);$ they will be important ahead. Here, we find that the Laplacian operator $-\Delta_{\alpha}^{I}$ (classic) in $\LL^{2}(I)$ is self-adjoint by using the theory of quadratic forms.
\par Consider the operator 
$$-\Delta_{\alpha}^{I}:\mathrm{dom}\,(-\Delta_{\alpha}^{I})\rightarrow \LL^{2}(I),$$
with $\mathrm{dom}\,(-\Delta_{\alpha}^{I})=\{\psi \in H^{2}(I)\,;\psi\,\,\,\, \mbox{satisfies}\,\,\, (\ref{RII}) \},$
\begin{equation}\left \{
\begin{array}{cc}\label{RII}
-\psi'(0)-\alpha \psi(0)=0\\
\,\,\,\psi'(\e)+\alpha \psi(\e)=0 
\end{array}
\right..
\end{equation}
Let $b_{\alpha}\geq -|\alpha|^2$ be the closed  sesquilinear form $\mathcal{H},$ with domain $\mathrm{dom}\,\,b_{\alpha}=H^{1}(I)\sqsubseteq \LL^{2}(I),$ 
\begin{eqnarray*}
b_{\alpha}(\phi,\psi)=\int_{0}^{\e}\overline{ \phi'(y)}\,\psi'(y)\,\ud y+\alpha \big(\overline{\phi(\e)}\psi(\e)-\overline{\phi(0)}\psi(0)\big).
\end{eqnarray*}

\begin{teo}\label{laplaciano1}
Let   $\alpha\in \R-\{0\}.$ Then, the (negative) Laplacian operator $-\Delta_{\alpha}^{I}$  is the unique  self-adjoint operator associated  with the sesquilinear form $b_{\alpha},$ that is, 
$$b_{\alpha}(\phi,\psi)=(\phi,-\Delta_{\alpha}^{I}\psi),$$
for each $\phi\in \mathrm{dom}\,\,b_{\alpha}$, and $\psi\in\mathrm{dom}\,(-\Delta_{\alpha}^{I}).$ 
\end{teo}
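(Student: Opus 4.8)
The plan is to verify the two standard ingredients of the first representation theorem: that the form $b_\alpha$ is densely defined, symmetric, semibounded and closed, and that the operator $-\Delta_\alpha^I$ defined via the boundary conditions \eqref{RII} is precisely the operator canonically associated with it. Since the abstract representation theorem already guarantees a \emph{unique} self-adjoint operator $T$ with $\mathrm{dom}\,T\subseteq\mathrm{dom}\,b_\alpha$ and $b_\alpha(\phi,\psi)=(\phi,T\psi)$, the whole content of the theorem is the identification $T=-\Delta_\alpha^I$.

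First I would record that $\mathrm{dom}\,b_\alpha=H^1(I)$ is dense in $\LL^2(I)$ and that $b_\alpha$ is symmetric. For semiboundedness and closedness one estimates the boundary term: by the trace/Sobolev inequality on the bounded interval $I=(0,\e)$, for every $\delta>0$ there is $C_\delta$ with $|\psi(0)|^2+|\psi(\e)|^2\le \delta\|\psi'\|_{\LL^2(I)}^2+C_\delta\|\psi\|_{\LL^2(I)}^2$; choosing $\delta$ small (depending on $|\alpha|$) shows $b_\alpha(\psi,\psi)\ge \tfrac12\|\psi'\|^2-c\|\psi\|^2\ge -c\|\psi\|^2$, so $b_\alpha$ is bounded below, and that the form norm $\big(b_\alpha(\psi,\psi)+(c+1)\|\psi\|^2\big)^{1/2}$ is equivalent to the $H^1(I)$ norm; since $H^1(I)$ is complete in its own norm, $b_\alpha$ is closed. (The bound $b_\alpha\ge-|\alpha|^2$ claimed in the text will then follow, or can be quoted, from the explicit eigenvalue computation of the preceding section.)

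Next I would identify the associated operator. Let $T$ be the operator from the representation theorem: $\psi\in\mathrm{dom}\,T$ iff $\psi\in H^1(I)$ and there is $f\in\LL^2(I)$ with $b_\alpha(\phi,\psi)=(\phi,f)$ for all $\phi\in H^1(I)$, and then $T\psi=f$. Taking first $\phi\in C_c^\infty(I)$ gives, in the distributional sense, $-\psi''=f\in\LL^2(I)$, hence $\psi\in H^2(I)$ and $T\psi=-\psi''$. Then I integrate by parts in $b_\alpha(\phi,\psi)=\int_0^\e\overline{\phi'}\psi'+\alpha(\overline{\phi(\e)}\psi(\e)-\overline{\phi(0)}\psi(0))$ for general $\phi\in H^1(I)$: this produces the bulk term $(\phi,-\psi'')$ plus the boundary contribution $\overline{\phi(\e)}\big(\psi'(\e)+\alpha\psi(\e)\big)-\overline{\phi(0)}\big(\psi'(0)+\alpha\psi(0)\big)$, which must vanish for all $\phi\in H^1(I)$; since $\phi(0),\phi(\e)$ range independently over $\C$, this forces exactly the Robin conditions \eqref{RII}. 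Hence $\mathrm{dom}\,T\subseteq\mathrm{dom}(-\Delta_\alpha^I)$ and $T=-\Delta_\alpha^I$ there. For the reverse inclusion, any $\psi\in\mathrm{dom}(-\Delta_\alpha^I)$ satisfies \eqref{RII}, so running the integration by parts backwards gives $b_\alpha(\phi,\psi)=(\phi,-\psi'')$ for all $\phi\in H^1(I)$, placing $\psi\in\mathrm{dom}\,T$ with $T\psi=-\Delta_\alpha^I\psi$. Thus the two operators coincide, and self-adjointness of $-\Delta_\alpha^I$ is inherited from the representation theorem; uniqueness is part of that theorem as well.

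The only genuinely delicate point is the closedness/semiboundedness step, i.e. controlling the indefinite boundary term $\alpha(|\psi(\e)|^2-|\psi(0)|^2)$ by a small multiple of $\|\psi'\|^2$ plus a multiple of $\|\psi\|^2$; everything else is bookkeeping with integration by parts and the fundamental lemma of the calculus of variations. I would therefore state the trace inequality with the $\delta$-$C_\delta$ dependence explicitly and do that estimate carefully, then present the operator-identification as a short two-way integration-by-parts argument.
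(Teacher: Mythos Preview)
Your proposal is correct and follows essentially the same route as the paper: verify that $b_\alpha$ is semibounded and closed, then identify the associated operator via integration by parts, first against $\phi\in C_c^\infty(I)$ to get $\psi\in H^2(I)$ and $T\psi=-\psi''$, then against general $\phi\in H^1(I)$ to read off the Robin conditions, with the reverse inclusion by a direct integration by parts. The only cosmetic differences are that the paper obtains the sharp bound $b_\alpha\ge -|\alpha|^2$ by the explicit estimate $|\phi(\e)|^2-|\phi(0)|^2=\int_0^\e(|\phi|^2)'\le 2\|\phi'\|\,\|\phi\|$ together with Young's inequality (rather than quoting an abstract trace inequality), and that for the regularity step the paper follows Kato's primitive trick---introducing $z$ with $z'=\eta$ and showing $\psi'+z$ is constant---which is just a concrete rewriting of your distributional argument. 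Note also that the eigenvalue computation you allude to actually comes \emph{after} this theorem in the paper, so the sharp constant is established directly in the proof itself rather than imported.
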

\begin{proof}\,\, We will first prove that $b_{\alpha}\,$ is closed and lower bounded. Denote by $\|\,\,\|$ the norm in $\LL^{2}(I);$ we have  that 
$$b_{\alpha}(\phi)\geq -|\alpha|^{2}\|\phi\|^2\,,\,\,\ \forall\,\,\, \phi\in \mathrm{dom}\,b_{\alpha}.$$  Indeed, first note that for all $a>0$ we have
$2\|\phi'\|\,\|\phi\|\leq a^2\|\phi\|^2+\frac{1}{a^2}\|\phi'\|^2.$ If $\phi\in \mathrm{dom}\,b_{\alpha},$ then
\begin{align*}
\Big(|\phi(1)|^2-|\phi(0)|^2\Big)&=\int_{0}^{1}\frac{\ud}{\ud x}|\phi|^2\,\ud x=\int_{0}^{1}(\overline{\phi'}\phi+\phi'\overline{\phi})\,\ud x\\
{}                   &\leq 2\|\phi'\|\|\phi\|\leq a^2\|\phi\|^2+\frac{1}{a^2}\|\phi'\|^2.
\end{align*}
Consequently,
\begin{align*}
b_{\alpha}(\phi)&\geq \left(1-\frac{|\alpha|}{a^2}\right)\|\phi'\|^2-|\alpha|a^2\|\phi\|^2.
\end{align*}
Choosing $a=\sqrt{|\alpha|}\,,$ we obtain $b_{\alpha}(\phi)\geq-|\alpha|^2\|\phi\|^2.$
Note that by choosing $a=\sqrt{2|\alpha|},$ we will have
\begin{eqnarray}\label{close}
b_{\alpha}(\phi)\geq \frac{1}{2}\|\phi'\|^2-2|\alpha|^{2}\|\phi\|^2.
\end{eqnarray}
By inequality (\ref{close}) and since $H^{1}(I)\subset\CC[0,\e],$ with continuous injection, it follows that $b_{\alpha}$ is closed.
\par Denote by $T_{b_{\alpha}}$ the unique  self-adjoint operator associated with the form $b_{\alpha},$ then $T_{b_{\alpha}}=-\Delta_{\alpha}^{I}$ with $\mathrm{dom}\,T_{b_{\alpha}}=\mathrm{dom}\,(-\Delta_{\alpha}^{I}).$ Indeed, if $\psi\in \mathrm{dom}\,(-\Delta_{\alpha}^{I})$ then,  by integration by parts, we have the equality 
$b_{\alpha}(\phi,\psi)=(\phi,-\psi'')_{\LL^{2}(I)};$ whence we obtain
$T_{b_{\alpha}}|_{\mathrm{dom}\,(-\Delta_{\alpha}^{I})}=-\Delta_{\alpha}^{I},$ because $\mathrm{dom}\,(-\Delta_{\alpha}^{I})\subseteq \mathrm{dom}\,T_{b_{\alpha}},$ and so $T_{b_{\alpha}}=-\Delta_{\alpha}^{I}$.  
\par Let $\psi\in\mathrm{dom}\,T_{b_{\alpha}}$ and $\eta\ii=T_{b_{\alpha}}\psi\in \LL^{2}(I)$ then $(\phi,\eta)_{\LL^{2}(I)}= b_{\alpha}(\phi,\psi),$ i.e,
\begin{equation}\label{OpA}
\int_{0}^{\e}\bar{\phi}\eta\,\ud x=\int_{0}^{\e}\bar{\phi'}\psi'\,\ud x+
\alpha\Big(\bar{\phi}(\e)\psi(\e)-\bar{\phi}(0)\psi(0)\Big).
\end{equation}
Following the ideas in~\cite{Ka}, Example VI. 2.16, let $z\in H^{1}(I)$ a primitive of $\eta,$ i.e., $z'=\eta$ , then
\begin{equation}\label{OpA2}
\int_{0}^{\e}\bar{\phi}\eta\,\ud x=\int_{0}^{\e}\bar{\phi} z'\,\ud x=-\int_{0}^{\e}\bar{\phi'}z\,\ud x+\Big(\bar{\phi}(\e)z(\e)-\bar{\phi}(0)z(0)\Big).
\end{equation}
Comparing (\ref{OpA}) and (\ref{OpA2}) the equality
\begin{equation}\label{OpA3}
\int_{0}^{\e}\bar{\phi'}(\psi'+z)\,\ud x+ \bar{\phi}(\e)[\alpha\psi(\e)-z(\e)]+ \bar{\phi}(0)[-\alpha\psi(0)+z(0)]=0,\,\,\forall\,\,\phi\in H^{1}(I),
\end{equation}
holds true.
In particular, if $\phi\in \CC_{0}^{\infty}(0,\e)$ then $\int_{0}^{\e}\bar{\phi '}(\psi'+z)\,\ud x=0,$ so, 
$\psi'+z=c\quad\mbox{a.e}\quad x\in I.$
Therefore, $\psi\in H^{2}(I),$ because $\psi'=c-z\in H^{1}(I).$ Moreover,
$\psi''=-z'=-\eta$ and
\begin{equation}\left \{
\begin{array}{cc}\label{Contour}
\psi'(0)+z(0)= c\\
\psi'(\e)+z(\e)=c
\end{array}
\right..
\end{equation} 
\par Thus, only remains to verify the Robin condition for $\psi,$  and will follow the inclusion $\mathrm{dom}\,T_{b_\alpha}\subseteq\mathrm{dom}\,(-\Delta_{\alpha}^{I}).$
Finally, if we replace $(\psi'+z)=c$ in (\ref{OpA3}) and use integration by parts we deduce
\begin{equation}\label{OpA4}
\bar{\phi}(\e)[c+\alpha\psi(\e)-z(\e)]+\bar{\phi}(0)[-c-\alpha\psi(0)+z(0)]=0,\,\,\forall\,\,\phi\in H^{1}(I).
\end{equation}
Since $\phi$ is arbitrary and in view of~(\ref{Contour}) the desired conclusion follows.
\end{proof}

\begin{remark}
Note that  Theorem~\ref{laplaciano1}  holds for $\alpha=0,$ i.e., the  Laplacian $-\Delta_{N}^{I}$ with Neumann condition, which is the operator associated with $$b_{N}(\phi)=\int_{0}^{\e}|\phi'|^2\,\ud y,$$ $\mathrm{dom}\,b_{N}=H^{1}(I)$ and  
$\mathrm{dom}\,(-\Delta_{N}^{I})=\{\psi\in H^{2}(I), \psi'(0)=\psi'(\e)=0\}.$ In addition, we know that the eigenvalues of $-\Delta_{N}^{I}$ rearranged in an ascending order are given by $$\lambda_{0}^{N}=0,\,\,\,\lambda_{n}^{N}=\frac{n^2\pi^2}{\e^2}, \,\,\mbox{with}\,\,n\in\N.$$ The corresponding normalized eigenfunctions are:
\begin{equation}
\psi_{n}^{N}(y)=\left \{
\begin{array}{lc}
\sqrt{\frac{1}{\e}},& \mbox{if}\quad n=0,  \\
\sqrt{\frac{2}{\e}}\cos\left(\frac{n\pi y}{\e}\right), & \mbox{if}\quad n\geq1
\end{array}
\right..
\end{equation}
The sequence $\{\psi_{n}^{N}\}_{n=1}^{\infty}$ is a orthonormal basis of $\LL^{2}(I)$ and since $\lambda_{n}^{N}\rightarrow \infty$ when
$n\rightarrow \infty,$ we have that the essential spectrum $\sigma_{\mathrm{ess}}(-\Delta_{N}^{I})$ of operator $-\Delta_{N}^{I}$  is empty. Therefore,  the spectrum $\sigma(-\Delta_{N}^{I})$ of operator $-\Delta_{N}^{I}$ is equal to discrete spectrum $\sigma_{\mathrm{disc}}(-\Delta_{N}^{I}),$ that is,
$$\sigma(-\Delta_{N}^{I})=\sigma_{\mathrm{disc}}(-\Delta_{N}^{I})=\{\lambda_{n}^{N}\}_{n=0}^{\infty}.$$ For future reference, let
$\psi_{n}^{D}(x)=\sqrt{\frac{2}{\e}}\sin\big(({n\pi y})/\e\big),n\geq1,$ the normalized eigenfunctions associated with the Dirichlet Laplacian $-\Delta_{D}^{I},$ i.e., the operator associated with the quadratic form $$b_{D}(\phi)=\int_{0}^{\e}|\phi'|^2\,\ud x,$$ $\mathrm{dom}\,b_{D}=H_{0}^{1}(I)$ and
$\mathrm{dom}\,(-\Delta_{D}^{I})=\{\psi\in W^{2,2}(I), \psi(0)=\psi(\e)=0\}.$
 See~\cite{KK} for more details.
\end{remark}

\subsection{Absence of Essential spectrum}
Here we determine the essential spectrum of our transveral Robin Laplacian operator $\Delta_{\alpha}^{I}$. Inspired by~\cite{KBZ}, see section $(6.2),$ we will conclude  that $\sigma_{\mathrm{ess}}(-\Delta_{\alpha}^{I})=\emptyset.$ To prove this fact, we need the following result (Lemma~\ref{Kato} ahead) from~\cite{Ka} Theorem~V.2.20.


\begin{lemma}\label{Kato}
Let $\{\psi_{n}^{N}\}_{n=0}^{\infty}$ be a complete orthonormal family in $\mathcal{H}$ (Hilbert space) and let $\{\psi_{n}\}_{n=0}^{\infty}$ be a sequence such that $\sum_{n=0}^{\infty} \|\psi_{n}-\psi_{n}^{N}\|_{2}^{2} < \infty.$  Then, $\psi_{n}$ is a basis of $\mathcal{H}$, and so, if $0=\sum_{n=0}^{\infty}c_{n}^{\psi}\psi_n$, then all $c_{n}^{\psi}=0.$ 
\end{lemma}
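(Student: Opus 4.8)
The plan is to encode the hypothesis ``the $\psi_n$ are quadratically close to the orthonormal basis $\{\psi_n^N\}$'' through the linear operator $T$ determined by $T\psi_n^N=\psi_n$, to show that $T$ extends to a topological isomorphism of $\mathcal H$, and then to read off both conclusions: since $\{\psi_n\}=\{T\psi_n^N\}$ is the image of an orthonormal basis under an isomorphism it is a (Riesz, hence Schauder) basis of $\mathcal H$; and if $0=\sum_n c_n^\psi\psi_n$ then applying $T^{-1}$ gives $0=\sum_n c_n^\psi\psi_n^N$, so all $c_n^\psi=0$ by orthonormality.

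\textbf{Step 1 (the operator $T=I+K$).} Define $T$ on finite linear combinations by $T\big(\sum_n c_n\psi_n^N\big)=\sum_n c_n\psi_n$, i.e. $T=I+K$ with $K\psi_n^N=\psi_n-\psi_n^N$. For $x=\sum_n c_n\psi_n^N$ with finitely many nonzero $c_n$, Cauchy--Schwarz gives
$$\|Kx\|=\Big\|\sum_n c_n(\psi_n-\psi_n^N)\Big\|\leq\Big(\sum_n|c_n|^2\Big)^{1/2}\Big(\sum_n\|\psi_n-\psi_n^N\|_2^2\Big)^{1/2},$$
so $K$ (hence $T$) is bounded on a dense subspace and extends continuously to all of $\mathcal H$. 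In fact this computation shows $\|K\|_{\mathrm{HS}}^2=\sum_n\|K\psi_n^N\|_2^2=\sum_n\|\psi_n-\psi_n^N\|_2^2<\infty$, so $K$ is Hilbert--Schmidt, in particular compact.

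\textbf{Step 2 (invertibility and conclusion).} If the quadratic defect $\delta^2\ii=\sum_n\|\psi_n-\psi_n^N\|_2^2$ satisfies $\delta<1$, then $\|K\|\leq\delta<1$ and $T=I+K$ is invertible by the Neumann series $T^{-1}=\sum_{k\geq0}(-K)^k$. More generally, since $K$ is compact, $T=I+K$ is Fredholm of index zero; and when the $\psi_n$ already form an orthonormal system (which is the case in the intended application, where they are the eigenfunctions of the self-adjoint Sturm--Liouville operator $-\Delta_\alpha^I$ with separated boundary conditions, hence automatically orthonormal after normalization), $T$ is injective: $\sum_n c_n\psi_n=0$ forces $c_m=\big(\sum_n c_n\psi_n,\psi_m\big)=0$ for each $m$. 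An injective Fredholm operator of index zero is surjective, so $T$ is a bounded bijection with bounded inverse. Either way $T$ is an isomorphism, and Steps 1--2 applied as described above yield the two assertions.

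\textbf{Main obstacle.} The only delicate point is the passage from quadratic closeness to invertibility of $T$ in Step 2. The elementary bound of Step 1 only gives $\|K\|\leq\delta$, which suffices for a Neumann-series argument exactly when $\delta<1$; to work under the mere assumption $\sum_n\|\psi_n-\psi_n^N\|_2^2<\infty$ one must instead combine the compactness of $K$ with the orthonormality of $\{\psi_n\}$ available in the application, the two together upgrading ``Fredholm of index zero'' to ``bijective.'' The remaining steps are routine manipulations with the orthonormal basis $\{\psi_n^N\}$.
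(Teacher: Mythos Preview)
The paper does not prove this lemma; it simply cites Theorem~V.2.20 of Kato's \emph{Perturbation Theory for Linear Operators}. Your argument---encode the perturbation by $T=I+K$ with $K$ Hilbert--Schmidt, hence $T$ Fredholm of index zero, then use injectivity to upgrade to bijectivity---is exactly the standard proof of that theorem, so there is no methodological difference to discuss.

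What is worth noting is that you correctly identify a gap in the statement itself. As written, the lemma is false: take $\psi_0=\psi_1=\psi_0^N$ and $\psi_n=\psi_n^N$ for $n\geq2$; then $\sum_n\|\psi_n-\psi_n^N\|^2=2<\infty$, yet $\psi_0-\psi_1=0$ is a nontrivial relation and $\psi_1^N$ lies outside the closed span of $\{\psi_n\}$. Kato's actual hypothesis is that $\{\psi_n\}$ be $\omega$-linearly independent (i.e., $\sum c_n\psi_n=0$ with $\sum|c_n|^2<\infty$ forces every $c_n=0$); the conclusion is then completeness, hence a Riesz basis. The paper's own application (the lemma immediately following) does supply this extra ingredient---there the $\psi_n$ are orthonormal eigenfunctions of the self-adjoint $-\Delta_\alpha^I$---and indeed that proof first checks $\omega$-independence via orthogonality before invoking the present result. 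Your proof is correct once this missing hypothesis is restored, and your ``Main obstacle'' paragraph diagnoses the issue precisely.
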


\par We will see in the next lemma, that the elements
$$\psi_{0}(y)=ce^{-\alpha y},\quad \psi_{n}(y)=\frac{n\pi}{(n^2\pi^2+\alpha^2\e^2)^{1/2}}\left(\psi_{n}^{N}-\frac{\alpha\e}{n\pi}\psi_{n}^{D}\right),n\geq 1$$ 
consist of an orthonormal basis of $\LL^{2}(I).$ Moreover, in Subsection~\ref{pontualS}, theses elements are shwon to be eigenvectors of $-\Delta_{\alpha}^{I},$ associated with the eigenvalues, respectively, $$\lambda_{0}=-\alpha^2\quad\mbox{and}\quad\lambda_{n}=(n^2\pi^2)/\e^2, n\geq1.$$ 

\begin{lemma}
The sequence $\{\psi_n\}_{n=0}^{\infty}$ is a complete orthonormal family in $\LL^{2}(I).$ 
\end{lemma}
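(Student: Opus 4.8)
The plan is to establish, in order, that every $\psi_n$ has unit norm, that distinct members $\psi_m,\psi_n$ are mutually orthogonal, and — the substantive point — that the family is complete; for the last step I would invoke Lemma~\ref{Kato} with the reference family $\{\psi_n^N\}_{n=0}^{\infty}$, so that completeness is reduced to a summability estimate.

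Normalization and orthogonality are direct computations resting on two elementary facts: $\{\psi_n^N\}$ and $\{\psi_n^D\}$ are orthonormal systems in $\LL^2(I)$, and $\int_0^{\e}\cos(n\pi y/\e)\sin(n\pi y/\e)\,\ud y=0$, so $\psi_n^N\perp\psi_n^D$ for every $n\ge 1$. From the latter, for $n\ge 1$,
$$\|\psi_n\|^2=\frac{n^2\pi^2}{n^2\pi^2+\alpha^2\e^2}\Big(\|\psi_n^N\|^2+\frac{\alpha^2\e^2}{n^2\pi^2}\|\psi_n^D\|^2\Big)=1,$$
while $c>0$ is the normalizing constant determined by $c^2\int_0^{\e}e^{-2\alpha y}\,\ud y=1$. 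For orthogonality I would split into two cases. When $1\le m\ne n$, expand $\langle\psi_m,\psi_n\rangle$ bilinearly: the diagonal contributions $\langle\psi_m^N,\psi_n^N\rangle$ and $\langle\psi_m^D,\psi_n^D\rangle$ vanish by orthonormality of $\{\psi_k^N\}$ and $\{\psi_k^D\}$, and the two remaining cross terms $\langle\psi_m^N,\psi_n^D\rangle$ and $\langle\psi_m^D,\psi_n^N\rangle$ are evaluated from $\int_0^{\e}\cos(m\pi y/\e)\sin(n\pi y/\e)\,\ud y$; one checks that the coefficients $\alpha\e/(n\pi)$ and $\alpha\e/(m\pi)$ in the definition of the $\psi_k$ are exactly those making the two contributions cancel. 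When $n\ge 1$, $\langle\psi_0,\psi_n\rangle$ equals, up to a positive constant,
$$\int_0^{\e}e^{-\alpha y}\Big(\cos(n\pi y/\e)-\tfrac{\alpha\e}{n\pi}\sin(n\pi y/\e)\Big)\,\ud y,$$
and evaluating the two standard integrals $\int_0^{\e}e^{-\alpha y}\cos(n\pi y/\e)\,\ud y$ and $\int_0^{\e}e^{-\alpha y}\sin(n\pi y/\e)\,\ud y$ — which both carry the common factor $1-(-1)^n e^{-\alpha\e}$ divided by $\alpha^2+n^2\pi^2/\e^2$ — shows the bracket collapses, since $(n\pi/\e)(\alpha\e/(n\pi))=\alpha$. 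Hence $\{\psi_n\}_{n=0}^{\infty}$ is an orthonormal family.

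For completeness I would apply Lemma~\ref{Kato}, which reduces the task to verifying $\sum_{n=0}^{\infty}\|\psi_n-\psi_n^N\|^2<\infty$. Writing $t_n=n\pi/(n^2\pi^2+\alpha^2\e^2)^{1/2}$, for $n\ge 1$ one has
$$\psi_n-\psi_n^N=(t_n-1)\psi_n^N-\frac{\alpha\e}{(n^2\pi^2+\alpha^2\e^2)^{1/2}}\,\psi_n^D,$$
so, using $\psi_n^N\perp\psi_n^D$ once more,
$$\|\psi_n-\psi_n^N\|^2=(t_n-1)^2+\frac{\alpha^2\e^2}{n^2\pi^2+\alpha^2\e^2}.$$
Since $t_n=1+O(n^{-2})$ as $n\to\infty$, the right-hand side is $O(n^{-2})$, and the lone $n=0$ term is finite; hence the series converges, and Lemma~\ref{Kato} gives that $\{\psi_n\}_{n=0}^{\infty}$ is a basis of $\LL^2(I)$, which together with the orthonormality already proved finishes the argument.

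The step I expect to be the main obstacle is the completeness assertion, but the difficulty there is essentially bookkeeping: once the orthogonality identities are in place and the $O(n^{-2})$ decay of $\|\psi_n-\psi_n^N\|^2$ is extracted, Lemma~\ref{Kato} does the rest. The only point demanding genuine care is ensuring the cancellations in the cross terms (both in $\langle\psi_0,\psi_n\rangle$ and in $\langle\psi_m,\psi_n\rangle$) use precisely the coefficients appearing in the definition of the $\psi_n$ — which is exactly why those coefficients are chosen as they are.
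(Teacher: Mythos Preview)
Your proof is correct and follows essentially the same route as the paper: verify orthonormality, then establish $\sum_{n}\|\psi_n-\psi_n^N\|^2<\infty$ and invoke Lemma~\ref{Kato}. The only cosmetic differences are that the paper takes orthogonality for granted (you prove it in detail), and the paper shows summability via the integral test applied to $\|\psi_n-\psi_n^N\|^2=2-2n\pi/\sqrt{n^2\pi^2+\alpha^2\e^2}$ rather than your equivalent $O(n^{-2})$ estimate.
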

\begin{proof}\,\,
Indeed, given $\phi\in \LL^{2}(I)$ and $\psi=\sum_{n=0}^{\infty}c_{n}^{\psi}\psi_n$  it follows that
\begin{equation}\label{3.1}
(\phi,\psi)_{\LL^{2}(I)}=\lim_{m\rightarrow \infty}(\phi,\sum_{n=0}^{m} c_{n}^{\psi}\psi_n)_{\LL^{2}(I)}.
\end{equation}
For $\psi=0$ and $\phi=\psi_{k},$ it follows by (\ref{3.1}) and orthogonality that $c_{k}^{\psi}=0,$ for each $\, k=0,1,...$ 
On the other hand, we have
$$\sum_{n=0}^{\infty} \|\psi_{n}-\psi_{n}^{N}\|_{\LL^2(I)}^{2} < \infty.$$ Indeed, for $n\geq 1,$ we have
$$\|\psi_{n}-\psi_{n}^{N}\|_{\LL^2(I)}^{2}=2-\frac{2n\pi}{\sqrt{n^2\pi^2+\alpha^2\e^2}}\rightarrow 0,\qquad n\rightarrow \infty.$$
Since  the function $f(x)=2-\dis\frac{2x\pi}{\sqrt{x^2\pi^2+\alpha^2\e^2}}$ is decreasing and $\int_{1}^{\infty}f(x)\,\ud x< \infty,$ the integral test for convergence  give us that $\Sigma_{n=1}^{\infty}f(n)<+\infty,$ consequently,
$\sum_{n=0}^{\infty} \|\psi_{n}-\psi_{n}^{N}\|_{\LL^2(I)}^{2}$ is convergent. Therefore, by Lemma~\ref{Kato},  the desired result follows.
\end{proof}

\begin{teo}\label{autofun}
Let us suppose  $\alpha\in \R-\{0\},$ then the transversal Robin Laplacian $-\Delta_{\alpha}^{I}$ has purely discrete spectrum, i.e., the essential spectrum $\sigma_{\mathrm{ess}}(-\Delta_{\alpha}^{I})$ of operator $-\Delta_{\alpha}^{I}$  is empty.
\end{teo}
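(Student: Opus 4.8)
The plan is to exhibit an explicit orthonormal basis of $\LL^2(I)$ consisting of eigenvectors of $-\Delta_\alpha^I$, since an operator whose point spectrum admits a complete orthonormal system of eigenfunctions with eigenvalues accumulating only at $+\infty$ has empty essential spectrum. The candidate basis is exactly the family $\{\psi_n\}_{n=0}^\infty$ introduced just before the statement, which the previous lemma has already shown to be a complete orthonormal family in $\LL^2(I)$.

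First I would verify directly that each $\psi_n$ lies in $\mathrm{dom}\,(-\Delta_\alpha^I)$: each is smooth on $I$, hence in $H^2(I)$, so the only thing to check is the Robin boundary condition~(\ref{RII}). For $\psi_0(y)=c e^{-\alpha y}$ one computes $\psi_0'(0)=-\alpha c=-\alpha\psi_0(0)$ and $\psi_0'(\e)=-\alpha c e^{-\alpha\e}=-\alpha\psi_0(\e)$, which are precisely $-\psi_0'(0)-\alpha\psi_0(0)=0$ and $\psi_0'(\e)+\alpha\psi_0(\e)=0$. For $n\geq 1$ one uses that $\psi_n^N(y)=\sqrt{2/\e}\cos(n\pi y/\e)$ and $\psi_n^D(y)=\sqrt{2/\e}\sin(n\pi y/\e)$; a short calculation of the derivatives at $0$ and $\e$ (using $\cos 0=1$, $\sin 0 =0$, $\cos n\pi=(-1)^n$, $\sin n\pi=0$) shows the combination $\psi_n^N-\frac{\alpha\e}{n\pi}\psi_n^D$ satisfies~(\ref{RII}), the coefficient $\frac{\alpha\e}{n\pi}$ being tuned exactly so that the boundary terms cancel. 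Then one checks $-\Delta_\alpha^I\psi_0=-\psi_0''=-\alpha^2\psi_0$ and $-\Delta_\alpha^I\psi_n=-\psi_n''=(n^2\pi^2/\e^2)\psi_n$ for $n\geq 1$, so the $\psi_n$ are eigenvectors with the stated eigenvalues $\lambda_0=-\alpha^2$ and $\lambda_n=n^2\pi^2/\e^2$.

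Once this is established, I would conclude as follows. By Theorem~\ref{laplaciano1} the operator $-\Delta_\alpha^I$ is self-adjoint, and it has an orthonormal basis of eigenvectors $\{\psi_n\}$ with eigenvalues $\lambda_n$ satisfying $\lambda_n\to+\infty$. Hence $\sigma(-\Delta_\alpha^I)=\overline{\{\lambda_n:n\geq 0\}}=\{\lambda_n:n\geq 0\}$ (the set is already closed since its only possible accumulation point is at $+\infty$, which is not attained), and every $\lambda_n$ is an isolated eigenvalue of finite multiplicity. Therefore $\sigma_{\mathrm{ess}}(-\Delta_\alpha^I)=\emptyset$, which is the assertion. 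Alternatively, and perhaps more cleanly in the self-adjoint setting, one invokes the spectral characterization that $\sigma_{\mathrm{ess}}$ is empty iff the resolvent is compact; writing $(-\Delta_\alpha^I+M)^{-1}$ for large $M>\alpha^2$ in the eigenbasis gives the diagonal operator with entries $(\lambda_n+M)^{-1}\to 0$, which is compact.

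The only genuinely delicate point is the boundary-condition check for $\psi_n$ with $n\geq 1$: one must keep careful track of the signs coming from $\cos$ and $\sin$ at the two endpoints and confirm that the precise coefficient $\alpha\e/(n\pi)$ forces the Robin combinations to vanish at both $0$ and $\e$ simultaneously — this is where the specific form of the ansatz is used. Everything else is routine: membership in $H^2(I)$ is immediate from smoothness, and the passage from ``complete orthonormal eigenbasis with eigenvalues $\to+\infty$'' to ``empty essential spectrum'' is a standard fact about self-adjoint operators.
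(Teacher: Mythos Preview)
Your proposal is correct and follows essentially the same route as the paper: both rely on the previous lemma establishing that $\{\psi_n\}_{n=0}^\infty$ is a complete orthonormal family of eigenvectors with eigenvalues $\lambda_n\to+\infty$, and then invoke the standard fact that this forces $\sigma_{\mathrm{ess}}=\emptyset$ (the paper cites Theorem~11.3.13 in~\cite{CEsar} for this step). The explicit boundary-condition verifications you include are carried out in the paper as well, only deferred to Subsection~\ref{pontualS} rather than inside the proof itself.
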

\begin{proof}\,\,  
 By Theorem~11.3.13 in~\cite{CEsar}, it follows that $\sigma_{\mathrm{ess}}(-\Delta_{\alpha}^{I})=\emptyset,$ since the sequence $\{\psi_{n}\}_{n=0}^{\infty}$ is a orthonormal basis of $\LL^{2}(I)$ and  $\lambda_{n}\rightarrow \infty$ when $n\to\infty.$ Therefore, 
$$\sigma(-\Delta_{\alpha}^{I})=\sigma_{\mathrm{disc}}(-\Delta_{\alpha}^{I})=\{\lambda_{n}\}_{n=0}^{\infty}.$$
\end{proof} 

\subsection{Point spectrum of the transversal Laplacian}\label{pontualS}
To obtain the point spectrum of $-\Delta_{\alpha}^{I},$ let us to determine $\lambda\in\R$ for which there exists $0\neq\psi\in H^{2}(I),$ normalized
in $\LL^{2}(I),$ satisfying
\begin{equation}\label{17J}
-\psi''=\lambda \psi
\end{equation}
and the boundary conditions ($\alpha\neq 0$),
\begin{equation}\label{18J}
\left \{
\begin{array}{cc}
-\psi'(0)-  \alpha \psi(0)=0\\
\,\,\,\,\psi'(\e)+ \alpha \psi(\e)=0 
\end{array}
\right..
\end{equation}
If $\lambda>0$ we know that the general solution (classic) of (\ref{17J}) is given by
\begin{equation}\label{19J}
\psi(x)=A\sin(\sqrt{\lambda}x)+B\cos(\sqrt{\lambda}x),
\end{equation}
with $A,B\in \C$ determined by the Robin condition and the normalization condition. Thus, by imposing the Robin conditions  on the general solution we obtain the following system
\begin{equation}\label{SI}
\left[\begin{array}{cc}
\sqrt{\lambda}  & \alpha  \\
\sqrt{\lambda}\cos(\sqrt{\lambda}\e)+\alpha\sin(\sqrt{\lambda}\e)  & \alpha\cos(\sqrt{\lambda}\e)-\lambda\sin(\sqrt{\lambda}\e) \\
\end{array} \right]
\left[\begin{array}{c}
A  \\
B  \\ 
\end{array} \right]=
\begin{array}{c}
0\,.
\end{array}
\end{equation}
Since we are  interested  in nonzero solution, we must impose that the determinant of the  above matrix is zero. This requirement enables us to obtain $\lambda$ explicitly: $$(\alpha^2 + \lambda)\sin(\sqrt{\lambda}\e)=0,$$ i.e., $$\lambda=-\alpha^{2}\quad\mbox{or}\quad\lambda=\frac{n^{2}\pi^2}{\e^2},\,\,\, n\in \Z.$$
Since the system (\ref{SI}) is equivalent to the equation $\sqrt{\lambda}A+\alpha B=0,$ we can express $A$ in function of $B,$ i.e,
$A=-\frac{\alpha}{\sqrt{\lambda}}B.$ Therefore, the corresponding eigenfunction  to  $\lambda=\frac{n^{2}\pi^2}{\e^2}, n\geq 1$, is given by
$$\psi_{n}(x)=B\left(-\frac{\alpha\e}{n\pi}\sin\big(({n\pi x})/\e\big)+\cos\big(({n\pi x})/\e\big)\right),$$ with $1=|B|^{2}\e\left[\frac{1}{2}\left(1+\frac{\alpha^2\e^2}{n^2\pi^2}\right)\right].$
\par Next, for $n\geq 1$ and choosing $B>0$ we have
$$\psi_{n}(x)=\frac{n\pi}{(n^2\pi^2+\alpha^2\e^2)^{1/2}}\left(\sqrt{\frac{2}{\e}}\cos\big(({n\pi x})/\e\big)-\frac{\alpha\e}{n\pi}\sqrt{\frac{2}{\e}}\sin\big(({n\pi x})/\e\big)\right),$$ one still has the following,
$$\psi_{n}(x)=\frac{n\pi}{(n^2\pi^2+\alpha^2\e^2)^{1/2}}\left(\psi_{n}^{N}-\frac{\alpha\e}{n\pi}\psi_{n}^{D}\right),$$
with $\psi_{n}^{D}\ii=\sqrt{\frac{2}{\e}}\sin\big(({n\pi x})/\e\big),\,\,n\geq 1.$ Recall that $\psi_{n}^{D}$ are the eigenfunctions of $-\Delta_{D}^{I}$, the Laplacian operator with Dirichlet boundary condition at the interval~$I.$

\par Suppose that $\lambda<0,$ then the general solution is of the form $$\psi(x)=Ae^{\sqrt{\mu}x}+Be^{-\sqrt{\mu}x},$$ with $\mu=-\lambda.$ Imposing the boundary conditions we obtain the system
\begin{equation}\label{SII}
\left[\begin{array}{cc}
\sqrt{\mu}+\alpha  & \alpha-\sqrt{\mu} \\
\sqrt{\mu}e^{\sqrt{\mu}\e}+\alpha e^{\sqrt{\mu}\e}  & \alpha e^{-\sqrt{\mu}\e}-\sqrt{\mu} e^{-\sqrt{\mu}\e} \\
\end{array} \right]
\left[\begin{array}{c}
A  \\
B  \\ 
\end{array} \right]=
\begin{array}{c}
0.
\end{array}
\end{equation}
Again, assuming that the determinant is null, follows the equality: $$(\alpha^{2}-\mu)(e^{-\sqrt{\mu}\e}-e^{\sqrt{\mu}\e})=0,$$ since $\mu\neq 0,$ we have $\mu=\alpha^{2},$ i.e., $\lambda=-\alpha^2.$ It follows from $\psi'(0)+\alpha\psi(0)=0$ that the eigenfunction associated with the negative eigenvalue $\lambda=-\alpha^2$ is $$\psi(x)=ce^{-\alpha x},\quad\mbox{with}\quad  c^{-1}=\|e^{-\alpha x}\|_{\LL^{2}(I)}.$$
\par If $\lambda=0$ then the general solution is $\psi(x)=Ax+B$ and from the boundary conditions it follows that $A=B=0.$ Therefore, $\lambda=0$ is not an eigenvalue.

\par In short, $\lambda_{0}\ii=-\alpha^2$ is the first eigenvalues (negative) of the Robin Laplacian on $I$ ( recall the self-adjointness of $-\Delta_{\alpha}^{I}$) associated with the normalized eigenfunction $\phi_{0}(y)=c e^{-\alpha y},\,c>0,$ i.e.,
$$-\phi_{0}^{''}=\lambda_{0}\phi_{0},\quad 0<\phi_{0}\in H^{2}(I)\quad\mbox{and}\quad \dis\int_{0}^{\e}|\phi_{0}|^2\,\ud y=1,$$
moreover, it satisfies the Robin boundary conditions \eqref{18J}. 

\section{ Infinite and straight planar strips}\label{LRF}

\par  The purpose of this section is to found that the classic Laplacian $-\Delta_{\alpha}^{\Omega_{\e}}$  in $\LL^{2}(\Omega_{\e}),$ with a suitable domain, is self-adjoint. For this purpose a convenient sesquilinear form $b_{\alpha}^{\Omega_\epsilon}$ has been introduced, whose definition will be made precise below.
\par Under certain conditions on~$\alpha$ at infinity, it is possible to prove the existence of isolated bound states, i.e., the existence of eigenvalues (of finite multiplicity) below the essential spectrum $\sigma_{\mathrm{ess}}(-\Delta_{\alpha}^{\Omega_{\e}})$ of Laplacian. For this purpose, we follow some ideas in~\cite{BK,GJ,MJ}. 

\par The closed sesquilinear form of interest is $b_{\alpha}^{\Omega_\epsilon}\geq- \|\alpha\|_{\infty}^{2}$ in $\LL^{2}(\Omega_{\e}),$ 
with domain the Hilbert space $H^{1}(\Omega_{\epsilon}),$
\begin{align*}
b_{\alpha}^{\Omega_\epsilon }(\phi,\psi)&=\int_{\Omega_\e}\overline{\nabla\phi}(x,y)\,\nabla\psi(x,y)\, \ud x\ud y\\
&+\int_{\R}^{}\alpha(x)\Big(\overline{\mathrm{tr}(\phi)}(x,\e)\mathrm{tr}(\psi)(x,\e)-\overline{\mathrm{tr}(\phi)}(x,0)\mathrm{tr}(\psi)(x,0)\Big)\,\ud x,
\end{align*}
where $\mathrm{tr}(\phi)$ denotes the trace in $\LL^{2}(\pt \Omega_{\e})$ of $\phi\in H^{1}(\Omega_\e).$
Consider the negative Laplacian $$-\Delta_{\alpha}^{\Omega_{\epsilon}}:\mathrm{dom}\,(-\Delta_{\alpha}^{\Omega_{\epsilon}}) \rightarrow \LL^{2}(\Omega_{\epsilon}),$$
with $\mathrm{dom}\,(-\Delta_{\alpha}^{\Omega_{\epsilon}})=\{\psi \in H^{2}(\Omega_\epsilon)\,;\psi\,\,\mbox{satisfies}\,\,(\ref{R})\},$
\begin{equation}\label{R}
\left \{
\begin{array}{cc}
-\dis\frac{\pt \psi}{\pt y}(x,0)- \alpha(x) \psi(x,0)=0\\ 
\,\,\,\,\dis\frac{\pt \psi}{\pt y}(x,\epsilon)+  \alpha(x) \psi(x,\epsilon)=0
\end{array}
\right.;
\end{equation}
in this section we require $\alpha(x)\in  W^{1,\,\infty}(\R).$ 
\par A lower bound for $b_{\alpha}^{\Omega_\epsilon }$ is  initially obtained for $\phi|_{\Omega_{\e}}$ with $\phi\in \CC_{0}^{\infty}(\R^{2})$ and, by density, for each $\phi\in H^{1}(\Omega_\e).$ Note that $\mathrm{tr}(\phi|_{\Omega_{\e}})=\phi|_{\Omega_{\e}}$ in $\LL^{2}(\pt\Omega_\e),$ consequently
\begin{eqnarray*}
b_{\alpha}^{\Omega_\epsilon }(\phi) \geq  \int_{\R}\left[\int_{0}^{\e} \left|\frac{\pt \phi}{\pt y}\right|^2\,\ud y +
\alpha(x)\big(|\phi(x,\e)|^2-|\phi(x,0)|^2\big)\right]\,\ud x,
\end{eqnarray*} 
since $\phi(x,\cdot)\in H^{1}(0,\e)$ for a.e.\ $x\in\R.$ By an argument in Theorem~\ref{laplaciano1}, we obtain
$$b_{\alpha}^{\Omega_\epsilon }(\phi)\geq-\|\alpha\|_{\infty}^{2}\int_{\Omega_{\e}}|\phi|^2\,\ud x\ud y,$$   
for each $\phi|_{\Omega_\e}$ with $\phi\in\CC_{0}^{\infty}(\R^{2}).$
Let $\phi\in H^{1}(\Omega_\e)$ and $\{\phi_{m}\}_{m=1}^{\infty}$ a sequence in $\CC_{0}^{\infty}(\R^{2})$ such that $\phi_{m}|_{\Omega_\e} \rightarrow \phi$ in $H^{1}(\Omega_\e).$ Then, for each $\phi\in H^{1}(\Omega_\e)$ one finds that 
\begin{equation}\label{linf}
b_{\alpha}^{\Omega_\epsilon }(\phi) \geq -\beta\|\phi\|^2,\quad \mbox{with}\quad \beta=\|\alpha\|_{\infty}^{2}.
\end{equation}
By standard arguments we can verify that $b_{\alpha}^{\Omega_\e }$ is closed. 

\par In the next theorem we show the self-adjointness of  $-\Delta_{\alpha}^{\Omega_\e},$ i.e., that $-\Delta_{\alpha}^{\Omega_\e}$ is the Rodin Laplacian, what we mean as the operator associated with the form $b_{\alpha}^{\Omega_\epsilon }.$ 
\begin{teo}\label{alpha}
Suppose that $\alpha \in W^{1,\,\,\infty}(\R).$ Then, the negative Laplacian  $-\Delta_{\alpha}^{\Omega_\epsilon}$ is the (unique) self-adjoint operator associated with the sesquilinear form  $b_{\alpha}^{\Omega_\epsilon},$ i.e., 
\[
b_{\alpha}^{\Omega_\epsilon}(\phi,\psi)=(\phi,-\Delta_{\alpha}^{\Omega_\e}\psi)_{\LL^{2}}
\]
for $\phi\in \mathrm{dom}\,\,b_{\alpha}^{\Omega_\epsilon },\,\psi\in\mathrm{dom}(-\Delta_{\alpha}^{\Omega_\epsilon }).$ 
\end{teo}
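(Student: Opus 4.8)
The plan is to follow the blueprint already laid down in the proof of Theorem~\ref{laplaciano1}, now in the two-dimensional setting. Since $b_\alpha^{\Omega_\epsilon}$ is a closed, densely defined, lower-bounded sesquilinear form (as established just above the statement), the first representation theorem for forms yields a unique self-adjoint operator $T\ii=T_{b_\alpha^{\Omega_\epsilon}}$ with $\mathrm{dom}\,T\subseteq H^1(\Omega_\epsilon)$ and $b_\alpha^{\Omega_\epsilon}(\phi,\psi)=(\phi,T\psi)_{\LL^2}$ for all $\phi\in H^1(\Omega_\epsilon)$, $\psi\in\mathrm{dom}\,T$. The goal is then to prove the two inclusions $\mathrm{dom}\,(-\Delta_\alpha^{\Omega_\epsilon})\subseteq\mathrm{dom}\,T$ with $T|_{\mathrm{dom}\,(-\Delta_\alpha^{\Omega_\epsilon})}=-\Delta_\alpha^{\Omega_\epsilon}$, and conversely $\mathrm{dom}\,T\subseteq\mathrm{dom}\,(-\Delta_\alpha^{\Omega_\epsilon})$.

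First I would treat the easy inclusion. Take $\psi\in\mathrm{dom}\,(-\Delta_\alpha^{\Omega_\epsilon})\subseteq H^2(\Omega_\epsilon)$ satisfying the boundary conditions~\eqref{R}. For $\phi\in H^1(\Omega_\epsilon)$, apply Green's formula (integration by parts in the $y$-variable, fibrewise in $x$, using that $\psi(x,\cdot)\in H^2(0,\epsilon)$ for a.e.\ $x$, then integrating in $x$): the boundary terms at $y=0$ and $y=\epsilon$ are exactly $-\int_\R\alpha(x)(\overline{\mathrm{tr}(\phi)}(x,\epsilon)\,\partial_y\psi(x,\epsilon)+\cdots)$ type expressions, which by~\eqref{R} convert into the trace integral appearing in $b_\alpha^{\Omega_\epsilon}$. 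This gives $b_\alpha^{\Omega_\epsilon}(\phi,\psi)=(\phi,-\Delta\psi)_{\LL^2}$, hence $\psi\in\mathrm{dom}\,T$ and $T\psi=-\Delta_\alpha^{\Omega_\epsilon}\psi$; since a self-adjoint operator has no proper self-adjoint extension, combined with the reverse inclusion below we conclude $T=-\Delta_\alpha^{\Omega_\epsilon}$.

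For the reverse inclusion I would mimic the argument from Theorem~\ref{laplaciano1} adapted to $\Omega_\epsilon$. Let $\psi\in\mathrm{dom}\,T$, set $\eta\ii=T\psi\in\LL^2(\Omega_\epsilon)$, so that
\[
\int_{\Omega_\epsilon}\overline{\nabla\phi}\cdot\nabla\psi\,\ud x\ud y+\int_\R\alpha(x)\big(\overline{\mathrm{tr}(\phi)}(x,\epsilon)\mathrm{tr}(\psi)(x,\epsilon)-\overline{\mathrm{tr}(\phi)}(x,0)\mathrm{tr}(\psi)(x,0)\big)\,\ud x=\int_{\Omega_\epsilon}\overline{\phi}\,\eta\,\ud x\ud y
\]
for all $\phi\in H^1(\Omega_\epsilon)$. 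Testing first against $\phi\in\CC_0^\infty(\Omega_\epsilon)$ shows $-\Delta\psi=\eta$ in the distributional sense, so elliptic regularity gives $\psi\in H^2_{\mathrm{loc}}(\Omega_\epsilon)$; to upgrade to $\psi\in H^2(\Omega_\epsilon)$ globally one uses interior estimates together with boundary regularity up to the flat pieces of $\partial\Omega_\epsilon$ (difference quotients in the $x$-direction are unproblematic since the strip is translation-invariant in $x$, and the $W^{1,\infty}$ hypothesis on $\alpha$ keeps the boundary term controlled). Once $\psi\in H^2(\Omega_\epsilon)$, integrate by parts in the identity above: the interior term produces $(\phi,-\Delta\psi)$ plus genuine boundary integrals $\int_\R\big(\overline{\mathrm{tr}(\phi)}(x,\epsilon)\,\partial_y\psi(x,\epsilon)-\overline{\mathrm{tr}(\phi)}(x,0)\,\partial_y\psi(x,0)\big)\,\ud x$, and comparing with the $\alpha$-term, since $\mathrm{tr}(\phi)$ ranges over a dense set of boundary traces, forces $\partial_y\psi(x,\epsilon)+\alpha(x)\psi(x,\epsilon)=0$ and $-\partial_y\psi(x,0)-\alpha(x)\psi(x,0)=0$ in $\LL^2(\R)$, i.e.\ exactly~\eqref{R}. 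Hence $\psi\in\mathrm{dom}\,(-\Delta_\alpha^{\Omega_\epsilon})$.

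The main obstacle is the global $H^2$-regularity step: unlike the one-dimensional case, where $\psi'=c-z\in H^1(I)$ came for free, here one must invoke elliptic regularity theory on the unbounded strip with Robin data and check that the $W^{1,\infty}$-regularity of $\alpha$ suffices to carry the boundary estimates uniformly in $x$. The translation-invariance of $\Omega_\epsilon$ in the longitudinal direction makes the tangential difference-quotient estimates clean, and the boundary is smooth (two parallel lines), so standard results apply; the role of $\alpha\in W^{1,\infty}(\R)$ is precisely to ensure the Robin boundary term does not destroy the second-order regularity. Everything else is a faithful two-dimensional transcription of the interval computation, with $\CC_0^\infty(\Omega_\epsilon)$ playing the role of $\CC_0^\infty(0,\epsilon)$ and the trace operator replacing pointwise evaluation.
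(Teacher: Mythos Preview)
Your proposal is correct and follows essentially the same route as the paper. The paper also splits the proof into the easy inclusion (integration by parts using~\eqref{R}, their Lemma~\ref{extension}) and the hard inclusion (their Lemma~\ref{include}), and for the latter carries out in detail precisely the tangential difference-quotient argument you sketch: one shows $\|\psi_\delta\|_{H^1}$ is bounded uniformly in~$\delta$ (this is where $\alpha\in W^{1,\infty}$ enters, through the bound on~$\alpha_\delta$), extracts a weak limit to get $\partial_x\psi\in H^1(\Omega_\epsilon)$, invokes interior elliptic regularity for $-\Delta\psi=F$ a.e., and then recovers $\partial_{yy}\psi=-(F+\partial_{xx}\psi)\in\LL^2$; the boundary conditions are read off by a final integration by parts, exactly as you describe.
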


The proof is presented through Lemmas~\ref{include} and~\ref{extension}. The first lemma gives some information on the domain of  $T_{b_{\alpha}^{\Omega_\e}},$ associated with $b_{\alpha}^{\Omega_\e}.$ It shows that $\mathrm{dom}\,T_{b_{\alpha}^{\Omega_\e}}\subset \mathrm{dom}(-\Delta_{\alpha}^{\Omega_\e}).$
The second one concludes that $T_{b_{\alpha}^{\Omega_\e}}$ is an extension of $-\Delta_{\alpha}^{\Omega_\e}.$ Therefore, we obtain the equality $T_{b_{\alpha}^{\Omega_\e}}=-\Delta_{\alpha}^{\Omega_\e}.$ 
\begin{lemma}\label{include}
Suppose $\alpha\in W^{1,\,\,\infty}(\R).$ For each $F\in \LL^{2}(\Omega_\e),$ every solution $\psi\in H^{1}(\Omega_\e)$ of problem
\begin{equation}\label{2.1}
b_{\alpha}^{\Omega_\e}(\phi,\psi)=(\phi,F)_{\LL^{2}(\Omega_\e)},\,\,\forall\,\,\phi\in \mathrm{dom}\,b_{\alpha}^{\Omega_\e}=H^{1}(\Omega_\e),
\end{equation}
belongs to $\mathrm{dom}(-\Delta_{\alpha}^{\Omega_\e}).$ Consequently,  
$\mathrm{dom}\,T_{b_{\alpha}^{\Omega_\e}}\subset \mathrm{dom}(-\Delta_{\alpha}^{\Omega_\e}).$
\end{lemma}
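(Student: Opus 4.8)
The plan is to mimic the one-dimensional argument from Theorem~\ref{laplaciano1}, upgrading the elliptic regularity step by hand since we no longer have the cheap trick of ``antiderivative of $\eta$'' available in two dimensions. Let $\psi\in H^{1}(\Omega_\e)$ solve~\eqref{2.1} for some $F\in\LL^{2}(\Omega_\e)$. First I would test against $\phi\in \CC_{0}^{\infty}(\Omega_\e)$; then the boundary terms in $b_{\alpha}^{\Omega_\e}$ vanish and we get $\int_{\Omega_\e}\overline{\nabla\phi}\cdot\nabla\psi\,\ud x\ud y=(\phi,F)$, i.e.\ $-\Delta\psi=F$ in the distributional sense on $\Omega_\e$. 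Interior elliptic regularity already gives $\psi\in H^{2}_{\mathrm{loc}}(\Omega_\e)$; to get $\psi\in H^{2}(\Omega_\e)$ up to the (flat) boundary one uses the standard difference-quotient method in the $x$-direction, which is tangential to $\pt\Omega_\e$: since $\alpha\in W^{1,\infty}(\R)$, the translated coefficients are uniformly controlled and the commutator with the boundary integral is bounded by $\|\alpha\|_{W^{1,\infty}}$ times $H^{1}$-norms, so one obtains a uniform bound on $\partial_x\psi$ in $H^{1}$. The remaining second derivative $\partial_y^2\psi$ is then recovered algebraically from $\partial_y^2\psi=-F-\partial_x^2\psi\in\LL^{2}(\Omega_\e)$. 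Because the strip is straight and of constant width, there are no curvature terms, so this is the cleanest possible version of boundary regularity; the only genuine care is that $\Omega_\e$ is unbounded, which is harmless since the difference quotients are taken in the unbounded direction and all estimates are translation-uniform.

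Once $\psi\in H^{2}(\Omega_\e)$ is established, I would integrate by parts in~\eqref{2.1}: for $\phi\in H^{1}(\Omega_\e)$,
\[
b_{\alpha}^{\Omega_\e}(\phi,\psi)=\int_{\Omega_\e}\overline{\phi}\,(-\Delta\psi)\,\ud x\ud y+\int_{\R}\overline{\mathrm{tr}(\phi)}(x,\e)\Big(\tfrac{\pt\psi}{\pt y}(x,\e)+\alpha(x)\psi(x,\e)\Big)\,\ud x+\int_{\R}\overline{\mathrm{tr}(\phi)}(x,0)\Big(-\tfrac{\pt\psi}{\pt y}(x,0)-\alpha(x)\psi(x,0)\Big)\,\ud x.
\]
Comparing with $(\phi,F)_{\LL^{2}}$ and using $-\Delta\psi=F$ already known from the compactly supported test functions, the two bulk terms cancel and we are left with the vanishing of the boundary integrals for every $\phi\in H^{1}(\Omega_\e)$. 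Since the traces $\mathrm{tr}(\phi)(\cdot,0)$ and $\mathrm{tr}(\phi)(\cdot,\e)$ range over a dense subset of $\LL^{2}(\R)$ as $\phi$ runs over $H^{1}(\Omega_\e)$ (they can be prescribed independently), the two bracketed expressions vanish in $\LL^{2}(\R)$, which is exactly the Robin boundary condition~\eqref{R}. Hence $\psi\in\mathrm{dom}(-\Delta_{\alpha}^{\Omega_\e})$, and applying this to $F=T_{b_{\alpha}^{\Omega_\e}}\psi$ gives $\mathrm{dom}\,T_{b_{\alpha}^{\Omega_\e}}\subset\mathrm{dom}(-\Delta_{\alpha}^{\Omega_\e})$, as claimed.

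The main obstacle is the boundary regularity step, i.e.\ proving $\psi\in H^{2}(\Omega_\e)$ rather than merely $H^{2}_{\mathrm{loc}}$; this is where the hypothesis $\alpha\in W^{1,\infty}(\R)$ is actually used, since the tangential difference-quotient estimate needs a derivative bound on the Robin coefficient appearing in the boundary form. Everything else — cancellation of bulk terms, identification of the boundary condition via density of traces — is routine once regularity is in hand. One should also remark that the flatness of $\pt\Omega_\e$ is what lets us take difference quotients purely in the $x$ variable without introducing a boundary straightening diffeomorphism, so no lower-order curvature corrections appear.
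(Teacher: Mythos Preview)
Your proposal is correct and follows essentially the same strategy as the paper: both arguments use tangential difference quotients in the $x$-direction (exploiting $\alpha\in W^{1,\infty}(\R)$ to control the differentiated boundary term), extract a uniform $H^{1}$ bound on $\partial_x\psi$, recover $\partial_{yy}\psi$ algebraically from $-\Delta\psi=F$, and then read off the Robin boundary conditions by integration by parts against arbitrary $\phi\in H^{1}(\Omega_\e)$. The only cosmetic difference is the ordering---you first identify $-\Delta\psi=F$ distributionally and then upgrade regularity, whereas the paper runs the difference-quotient estimate first and invokes interior regularity afterward---but the substantive ingredients are identical.
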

\begin{proof}\,\,
For $\psi\in H^{1}(\Omega_\e),$ let us introduce the quotient of Newton
$$\psi_{\delta}(x,y)\ii=\frac{\psi(x+\delta,y)-\psi(x,y)}{\delta},\,\,0\neq\delta\in\R.$$  Since
\begin{eqnarray*}
|\psi(x+\delta,y)-\psi(x,y)|=
\left|\int_{0}^{1}\frac{\pt \psi}{\pt x}(x+\delta t,y)\delta\,\ud t\right|
\leq |\delta|\int_{0}^{1}\left|\frac{\pt \psi}{\pt x}(x+\delta t,y)\right|\,\ud t,
\end{eqnarray*}
we have
\begin{eqnarray*}
\int_{\Omega_\e}|\psi_{\delta}|^2\,\ud x\ud y\leq
\int_{0}^{1}\left[\int_{\Omega_\e}\left|\frac{\pt \psi}{\pt x}(x+\delta t,y)\right|^2\ud x\ud y\right]\,\ud t
=\int_{\Omega_\e}\left|\frac{\pt \psi}{\pt x}(x,y)\right|^2\ud x\ud y.
\end{eqnarray*}
Therefore, 
\begin{equation}\label{2.2}
\int_{\Omega_\e}|\psi_{\delta}|^2\,\ud x\ud y\leq \|\psi\|_{1,2}^{2},\quad\forall\;0\neq\delta\in\R .
\end{equation}
If $\psi\in H^{1}(\Omega_\e)$ is a solution to~(\ref{2.1}), then $\psi_{\delta}$ is a solution to the problem
$$b_{\alpha}^{\Omega_\e}(\phi,\psi_{\delta})=(\phi,F_{\delta})_{\LL^{2}(\Omega_\e)}-\int_{\R}\alpha_{\delta}(x)\Big(\overline{\phi(x,\e)}\psi(x+\delta,\e)-\overline{\phi(x,0)}\psi(x+\delta,0)\Big)\,\ud x,
$$ 
for each $\phi\in H^{1}(\Omega_\e).$ By chosing $\phi=\psi_{\delta}$ and noting that $(\phi,F_{\delta})_{\LL^{2}(\Omega_\e)}=-(\phi_{-\delta},F)_{\LL^{2}(\Omega_\e)},$  we obtain that
\begin{equation}\label{2.3}
b_{\alpha}^{\Omega_\e}(\psi_{\delta})=-((\psi_{\delta})_{-\delta},F)_{\LL^{2}(\Omega_\e)}-\int_{\R}\alpha_{\delta}(x)
\Big(\overline{\psi_{\delta}(x,\e)}\psi(x+\delta,\e)-\overline{\psi_{\delta}(x,0)}\psi(x+\delta,0)\Big)\,\ud x.
\end{equation}
For simplicity we write $b_{\alpha}^{\Omega}(\psi_{\delta})=b_{1}^{\Omega}(\psi_{\delta})+b_{2}^{\Omega}(\psi_{\delta}),$
with $$b_{1}(\psi_{\delta})=\int_{\Omega}|\nabla \psi_{\delta}|^{2}\,\ud x\ud y\quad\mbox{and}\quad
b_{2}(\psi_{\delta})=\int_{\R}\alpha(x)\Big( |\psi_{\delta}(x,\e)|^2 - |\psi_{\delta}(x,0)|^2 \Big)\,\ud x\ud y.$$
By Schwarz inequality, Cauchy inequality, estimate (\ref{2.2}), boundedness of~$\alpha$ and $\alpha_{\delta},$ and compact embedding of $H^{1}(\Omega_\e)$ in $\LL^2(\pt \Omega_\e),$ we can produce the following estimates, for $t>0,$
$$
|((\psi_{\delta})_{-\delta},F)_{\LL^{2}(\Omega_\e)}|\leq 2\|F\|_{\LL^{2}(\Omega_\e)}\|((\psi_{\delta})_{-\delta}\|_{\LL^{2}(\Omega_\e)}
\leq t^{-1}\|F\|_{\LL^{2}(\Omega_\e)}^{2}+t\|\psi_{\delta}\|_{1,2}^2,
$$
\begin{eqnarray*}
\left|\int_{\R}\alpha_{\delta}(x)
\Big(\overline{\psi_{\delta}(x,\e)}\psi(x+\delta,\e)-\overline{\psi_{\delta}(x,0)}\psi(x+\delta,0)\Big)\,\ud x\right|\\
\leq \|\alpha\|_{\infty}\|\psi_{\delta}\|_{\LL^{2}(\pt \Omega_\e)}\|\psi\|_{\LL^{2}(\pt \Omega_\e)} \leq C\|\psi_{\delta}\|_{1,2}\|\psi\|_{1,2},
\end{eqnarray*}
with $C>0$ independent of $\delta,$ 
\begin{align*}
\left|b_{2}^{\Omega}(\psi_{\delta})\right| &\leq  \left |\int_{\Omega}\alpha(x)\frac{\pt}{\pt y}|\psi_{\delta}|^{2}\,\ud x\ud y\right|
\leq 2\|\alpha\|_{\infty}\| \psi_{\delta}\|_{\LL^2(\Omega_\e)}\|\pt_{2}\psi_{\delta}\|_{\LL^{2}(\Omega_{\e})}
\leq t^{-1}\|\alpha\|_{\infty}^{2} \|\psi\|_{1,2}^{2}+tb_{1}^{\Omega}(\psi_{\delta}).
\end{align*}
On the one hand, provided that $b_{\alpha}^{\Omega}(\psi_{\delta})=b_{1}^{\Omega}(\psi_{\delta})+b_{2}^{\Omega}(\psi_{\delta}),$ one has
$$b_{\alpha}^{\Omega}(\psi_{\delta})\geq (1-t)b_{1}^{\Omega}(\psi_{\delta})-t^{-1}\|\alpha\|_{\infty}^{2} \|\psi\|_{1,2}^{2} .$$ 
On the other hand, the identity (\ref{2.3}) produces
$$\left|b_{\alpha}^{\Omega}(\psi_{\delta})\right|\leq C\|\psi_{\delta}\|_{1,2}\|\psi\|_{1,2}+\Big(t^{-1}\|F\|_{\LL^{2}(\Omega_\e)}^{2}+t\|\psi_{\delta}\|_{1,2}^2\Big).$$
So,
$$(1-t)b_{1}^{\Omega}(\psi_{\delta})-t^{-1}\|\alpha\|_{\infty}^{2} \|\psi\|_{1,2}^{2}\leq C\|\psi_{\delta}\|_{1,2}\|\psi\|_{1,2}+\Big(t^{-1}\|F\|_{\LL^{2}(\Omega_\e)}^{2}+t\|\psi_{\delta}\|_{1,2}^2\Big).$$
Now, suppose that $0<t<1$ and add $(1-t)\|\psi_{\delta}\|_{2}^{2}$ to both sides of the above inequality, to obtain
\begin{align*}
(1-t)\|\psi_{\delta}\|_{1,2}^{2}&\leq\CC\|\psi_{\delta}\|_{1,2}\|\psi\|_{1,2}+t^{-1}\|F\|_{\LL^{2}(\Omega_\e)}^{2}+t\|\psi_{\delta}\|_{1,2}^2+
t^{-1}\|\alpha\|_{\infty}^{2} \|\psi\|_{1,2}^{2}\\
&+(1-t)\|\psi_{\delta}\|_{2}^{2}.
\end{align*}
Therefore,
\begin{eqnarray*}
0\leq (2t-1)\|\psi_{\delta}\|_{1,2}^{2}+C\|\psi_{\delta}\|_{1,2}\|\psi\|_{1,2}+\Big(t^{-1}\|F\|_{\LL^{2}(\Omega_\e)}^{2}+
t^{-1}\|\alpha\|_{\infty}^{2} \|\psi\|_{1,2}^{2}+(1-t)\|\psi\|_{1,2}^{2}\Big).
\end{eqnarray*}
Thus, we assume that $0<t<1/2$, so that the dominant term of the quadratic function is negative,  consequently we have
$\|\psi_{\delta}\|_{1,2} ^{2}\leq \tilde{C},$ with $\tilde{C}$ independent of $\delta.$ But, this estimate implies
$$\sup_{\delta}\| \psi_{-\delta} \|_{1,2}< \infty,$$ and since $H^{1}(\Omega_\e)$ is reflexive, every bounded sequence has a weakly convergent subsequence, then there is $v\in H^{1}(\Omega_\e)$ and a subsequence $\delta_{k}\rightarrow 0$ such that $\psi_{-\delta_{k}}\stackrel{w}{\rightarrow} v$ in $H^{1}(\Omega_\e).$ Hence,
\begin{eqnarray*}
\int_{\Omega_\e}\psi\pt_{x}\phi\,\ud x\,\ud y=\int_{\Omega_\e}\psi \lim_{\delta_{k}\rightarrow 0}\phi_{\delta_{k}}\,\ud x\ud y
=\lim_{\delta_{k}\rightarrow 0}\int_{\Omega_\e}\psi\phi_{\delta_{k}}\,\ud x\,\ud y\\
=-\lim_{\delta_k\rightarrow 0}\int_{\Omega_\e}\psi_{-\delta_{k}}\phi\,\ud x\ud y
=-\int_{\Omega_\e}v\phi\,\ud x\ud y.
\end{eqnarray*}
Therefore, $\pt_{x}\psi=v$ 
in the weak sense, and so $\pt_{x}\psi\in H^{1}(\Omega_\e).$ Consequently, 
$\pt_{xx}\psi\in \LL^{2}(\Omega_\e)$ and $\pt_{yx}\psi\in \LL^{2}(\Omega_\e).$  It follows from the standard elliptic regularity theorems (see~\cite{E} Theorem~1, page~309) that
$\psi\in W_{loc}^{2,2}(\Omega_\e),$ so $-\Delta \psi=F$ a.e.\ in $\Omega_{\e}.$ Hence, 
$\pt_{yy}\psi=-(F+\pt_{xx}\psi)\in \LL^{2}(\Omega_\e),$ and therefore $\psi\in W^{2,2}(\Omega_\e).$
\par Finally, it remains to verify that $\psi$ satisfies the boundary conditions. After integration by parts 
\begin{align*}
(\phi,F)_{\LL^{2}(\Omega_\e)}=b_{\alpha}^{\Omega_\e}(\psi,\phi)=(\phi,-\Delta\psi)_{\LL^{2}(\Omega_\e)}+\int_{\R}\overline{\phi(x,0)}[-\pt_{y}\psi(x,0)-\alpha(x)\psi(x,0)]\,\ud x\\
+\int_{\R}\overline{\phi(x,\e)}[\pt_{y}\psi(x,\e)+\alpha(x)\psi(x,\e)]\,\ud x
\end{align*}
for each $\phi\in H^{1}(\Omega_\e).$ This implies the boundary conditions, because
$-\Delta \psi=F$ a.e.\ in $\Omega_\e$ and $\phi$ is  arbitrary. 
\end{proof}

\begin{lemma}\label{extension}
Suppose that $\alpha\in W^{1,\,\,\infty}(\R)$. Then $T_{\alpha}=-\Delta_{\alpha}^{\Omega_\e}.$
\end{lemma}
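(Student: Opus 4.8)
\textbf{Proof plan for Lemma~\ref{extension}.}
The goal is to establish the reverse inclusion $\mathrm{dom}(-\Delta_{\alpha}^{\Omega_\e})\subseteq\mathrm{dom}\,T_{b_{\alpha}^{\Omega_\e}}$ together with the identity $T_{b_{\alpha}^{\Omega_\e}}\psi=-\Delta\psi$ on that domain; combined with Lemma~\ref{include}, which already gives $\mathrm{dom}\,T_{b_{\alpha}^{\Omega_\e}}\subseteq\mathrm{dom}(-\Delta_{\alpha}^{\Omega_\e})$, this yields $T_{b_{\alpha}^{\Omega_\e}}=-\Delta_{\alpha}^{\Omega_\e}$ and hence the self-adjointness asserted in Theorem~\ref{alpha}. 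The plan is the direct one: take an arbitrary $\psi\in\mathrm{dom}(-\Delta_{\alpha}^{\Omega_\e})$, so $\psi\in H^{2}(\Omega_\e)$ and $\psi$ satisfies the Robin conditions~\eqref{R} in the trace sense, set $F\ii=-\Delta\psi\in\LL^{2}(\Omega_\e)$, and verify that $b_{\alpha}^{\Omega_\e}(\phi,\psi)=(\phi,F)_{\LL^{2}(\Omega_\e)}$ for every $\phi\in H^{1}(\Omega_\e)$.

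The key computation is an integration by parts (Green's identity) on the strip. For $\phi\in\CC_{0}^{\infty}(\R^{2})$ restricted to $\Omega_\e$ one has
\[
\int_{\Omega_\e}\overline{\nabla\phi}\cdot\nabla\psi\,\ud x\ud y = \int_{\Omega_\e}\overline{\phi}\,(-\Delta\psi)\,\ud x\ud y + \int_{\R}\overline{\phi(x,\e)}\,\pt_{y}\psi(x,\e)\,\ud x - \int_{\R}\overline{\phi(x,0)}\,\pt_{y}\psi(x,0)\,\ud x,
\]
the $x$-boundary terms at $\pm\infty$ vanishing because $\phi$ has compact support. Substituting the Robin conditions $\pt_{y}\psi(x,\e)=-\alpha(x)\psi(x,\e)$ and $\pt_{y}\psi(x,0)=-\alpha(x)\psi(x,0)$ converts the two boundary integrals into exactly $-\int_{\R}\alpha(x)\big(\overline{\phi(x,\e)}\psi(x,\e)-\overline{\phi(x,0)}\psi(x,0)\big)\,\ud x$, which when moved to the left-hand side reconstitutes $b_{\alpha}^{\Omega_\e}(\phi,\psi)$; thus $b_{\alpha}^{\Omega_\e}(\phi,\psi)=(\phi,F)_{\LL^{2}(\Omega_\e)}$ for such $\phi$. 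One then extends this identity to all $\phi\in H^{1}(\Omega_\e)$ by density of $\CC_{0}^{\infty}(\R^{2})|_{\Omega_\e}$ in $H^{1}(\Omega_\e)$, using that both sides are continuous in $\phi$ with respect to the $H^{1}(\Omega_\e)$ norm — the volume term and the $\LL^2$ pairing are manifestly continuous, and the boundary term is continuous by the boundedness of $\alpha$ together with the continuity of the trace map $H^{1}(\Omega_\e)\to\LL^{2}(\pt\Omega_\e)$. This shows $\psi\in\mathrm{dom}\,T_{b_{\alpha}^{\Omega_\e}}$ with $T_{b_{\alpha}^{\Omega_\e}}\psi=F=-\Delta\psi$, i.e.\ $-\Delta_{\alpha}^{\Omega_\e}\subseteq T_{b_{\alpha}^{\Omega_\e}}$.

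Finally I would invoke the correspondence between closed lower-bounded forms and self-adjoint operators (the first representation theorem): $b_{\alpha}^{\Omega_\e}$ is closed and bounded below by~\eqref{linf}, so $T_{b_{\alpha}^{\Omega_\e}}$ is self-adjoint; a self-adjoint operator admits no proper symmetric extension, and $-\Delta_{\alpha}^{\Omega_\e}$ is symmetric (by the same integration-by-parts identity applied with $\phi\in\mathrm{dom}(-\Delta_{\alpha}^{\Omega_\e})$) and extended by the self-adjoint $T_{b_{\alpha}^{\Omega_\e}}$, whence the two coincide. Alternatively, and more cleanly, one combines the two inclusions directly: Lemma~\ref{include} gives $\mathrm{dom}\,T_{b_{\alpha}^{\Omega_\e}}\subseteq\mathrm{dom}(-\Delta_{\alpha}^{\Omega_\e})$ and this lemma gives the reverse, with the operators agreeing on the common domain. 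I do not expect a serious obstacle here: the regularity work was already done in Lemma~\ref{include}, and what remains is the routine Green's identity plus a density argument. The one point deserving care is the justification that the $x$-direction boundary contributions at infinity genuinely vanish and that the integration by parts is legitimate for $H^{2}$ functions on the unbounded strip — this is handled by first working with compactly supported $\phi$ and only then passing to the limit, which is why the density step is done last rather than first.
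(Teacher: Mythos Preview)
Your proposal is correct and follows essentially the same approach as the paper: take $\psi\in\mathrm{dom}(-\Delta_{\alpha}^{\Omega_\e})$, apply Green's identity together with the Robin conditions~\eqref{R} to obtain $b_{\alpha}^{\Omega_\e}(\phi,\psi)=(\phi,-\Delta\psi)_{\LL^{2}}$ for all $\phi\in H^{1}(\Omega_\e)$, conclude that $T_{b_{\alpha}^{\Omega_\e}}$ extends $-\Delta_{\alpha}^{\Omega_\e}$, and then invoke Lemma~\ref{include} for the reverse inclusion. The only difference is that you spell out the density argument (first $\phi\in\CC_{0}^{\infty}(\R^{2})|_{\Omega_\e}$, then pass to the limit) to justify integration by parts on the unbounded strip, whereas the paper simply asserts the identity directly for all $\phi\in\mathrm{dom}\,b_{\alpha}^{\Omega_\e}$.
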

\begin{proof}\,\, Let $\psi\in\mathrm{dom}\,(-\Delta_{\alpha}^{\Omega_\e}),$ then $\psi\in W^{2,\,2}(\Omega_\e)$ and it satisfies the 
boundary conditions~(\ref{R}). By integration by parts and (\ref{R}) we obtain, for each  $\phi\in \mathrm{dom}\,b_{\alpha}^{\Omega_\e},$ the identity
\begin{eqnarray*}
b_{\alpha}^{\Omega_\e}(\phi,\psi)=\int_{\R}\overline{\phi(x,\e)} \pt_{y}\psi(x,\e)\,\ud x-\int_{\R}\overline{\phi(x,0)}\pt_{y}\psi(x,0)\,\ud x
-\int_{\Omega}\overline{\phi(x,y)} \Delta\psi(x,y)\,\ud x\,\ud y\\
+\int_{\R}\alpha(x)\overline{\phi(x,\e)}\psi(x,\e)\,\ud x
-\int_{\R}\alpha(x)\overline{\phi(x,0)}\psi(x,0)\,\ud x
=(\phi,-\Delta \psi)_{\LL^{2}(\Omega_{\e})}.
\end{eqnarray*}
Thus, $\psi \in \mathrm{dom}\,T_{\alpha},$ and it follows that $T_{\alpha}$ is an extension of $-\Delta_{\alpha}^{\Omega_\e}.$ It follows, by Lemma~\ref{include}, the desired equality.
\end{proof}

\section{ The spectrum of the Robin Laplacian in $\Omega_{\e}$ }\label{discrete}
 
Here we investigate the spectrum of the operator $-\Delta_{\alpha}^{\Omega_\e}$ when the Robin parameter (function)~$\alpha$ in $W^{1,\,\infty}(\R)$ satisfies the condition 
\begin{equation}\label{Con}
\dis\lim_{|x|\to+\infty}(\alpha(x)-\alpha_{0})=0,
\end{equation} 
i.e, given $\delta>0$ there exists $a>0$ such that $|\alpha(x)-\alpha_{0}|<\delta$ whenever $|x|>a.$ Denote $\beta\ii=(\alpha-\alpha_0)$ and define the functions
\begin{equation}\label{uniforme}
\beta_{m}=\left \{
\begin{array}{cc}
\alpha-\alpha_0,\quad \mathrm{if} \quad |x|<m\\ 
0\quad\quad\,\,\,,\quad \mathrm{if}\quad |x|\geq m\end{array}
\right..
\end{equation} 
This sequence of bounded  functions with compact support converges to $\beta$ in $\LL^{\infty}(\R)$.
\par In case that \eqref{Con} holds, we prove that the essential part $\sigma_{\mathrm{ess}}(-\Delta_{\alpha}^{\Omega_\e})$ of the spectrum $-\Delta_{\alpha}^{\Omega_\e}$ is the interval $[-\alpha_{0}^{2},\infty).$ This statement is the content of Theorem~\ref{ess}, whose proof is performed in two steps, that is, Propositions~\ref{CPEss} and and~\ref{Compact}, whose proofs were inspired in~\cite{BK}. The proof of Proposition~\ref{CPEss} makes use of the so-called called Weyl criterion for the essential spectrum  (see~\cite{CEsar}, Theorem 11.2.7), which we recall.

\begin{lemma}(Weyl criterion)\label{5.1}
Let $T$ be a self-adjoint operator in a complex Hilbert space $\mathcal{H}.$ Then, $\lambda\in \sigma_{\mathrm{ess}}(T)$ iff there exists a sequence $\{\psi_n\}_{n=1}^{\infty}\subset \mathrm{dom}\,T$ such that
\begin{itemize}
\item [1)]$ \|\psi_n\|=1,\,\,\, \forall\, n\in \N;$
\item [2)]$\psi_ {n}\stackrel{w}{\longrightarrow}0,$ as $n\to\infty$ in $\mathcal{H};$
\item [3)]$(T-\lambda)\psi_{n}\to0$, as $n\to\infty.$
\end{itemize}
Such a sequence is called a singular Weyl sequence for $T$ at $\lambda.$
\end{lemma}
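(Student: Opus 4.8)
The final statement to prove is the Weyl criterion (Lemma~\ref{5.1}), which characterizes the essential spectrum of a self-adjoint operator $T$ in terms of singular Weyl sequences. Since the paper attributes this to Theorem 11.2.7 of~\cite{CEsar}, my plan is to give a self-contained proof along the classical lines. The key fact I would invoke is the spectral theorem: $T$ self-adjoint admits a projection-valued measure $E(\cdot)$ on $\R$ with $T=\int \lambda\,\ud E(\lambda)$. Recall also that by definition $\sigma_{\mathrm{ess}}(T)=\sigma(T)\setminus\sigma_{\mathrm{disc}}(T)$, where $\lambda\in\sigma_{\mathrm{disc}}(T)$ means $\lambda$ is an isolated eigenvalue of finite multiplicity; equivalently, $\lambda\in\sigma_{\mathrm{ess}}(T)$ iff for every $\varepsilon>0$ the spectral projection $E\big((\lambda-\varepsilon,\lambda+\varepsilon)\big)$ has infinite rank.

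**The two implications.** For the ``if'' direction, suppose $\{\psi_n\}\subset\mathrm{dom}\,T$ satisfies 1)--3). If $\lambda\notin\sigma_{\mathrm{ess}}(T)$, then either $\lambda\in\rho(T)$ or $\lambda$ is an isolated eigenvalue of finite multiplicity; in both cases one can write $\mathcal H=\ker(T-\lambda)\oplus M$ where $\dim\ker(T-\lambda)<\infty$ and $(T-\lambda)$ restricted to $M\cap\mathrm{dom}\,T$ is bounded below, i.e.\ $\|(T-\lambda)\varphi\|\geq c\|\varphi\|$ for $\varphi\in M\cap\mathrm{dom}\,T$ with some $c>0$. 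Decompose $\psi_n=\psi_n'+\psi_n''$ along this splitting. Condition 3) forces $\|\psi_n''\|\to0$ via the lower bound, while condition 2) forces $\psi_n'\to0$ (weak convergence to $0$ in a finite-dimensional space is norm convergence). Then $\|\psi_n\|\to0$, contradicting 1). Hence $\lambda\in\sigma_{\mathrm{ess}}(T)$.

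**Constructing the sequence for the ``only if'' direction.** Suppose $\lambda\in\sigma_{\mathrm{ess}}(T)$. Then $E\big((\lambda-1/k,\lambda+1/k)\big)$ has infinite rank for every $k\in\N$. I would build the $\psi_n$ inductively: having chosen orthonormal $\psi_1,\dots,\psi_{n-1}$, pick a unit vector $\psi_n$ in $\mathrm{ran}\,E\big((\lambda-1/n,\lambda+1/n)\big)$ orthogonal to $\psi_1,\dots,\psi_{n-1}$ — possible because that range is infinite-dimensional. Each such $\psi_n$ lies in $\mathrm{dom}\,T$ (since $\lambda$ near a bounded spectral window) and satisfies $\|(T-\lambda)\psi_n\|^2=\int_{|\mu-\lambda|<1/n}|\mu-\lambda|^2\,\ud\|E(\mu)\psi_n\|^2\leq 1/n^2\to0$, giving 3). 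Orthonormality gives 1), and any orthonormal sequence converges weakly to $0$, giving 2).

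**Main obstacle.** The substantive step is the ``if'' direction: exhibiting the orthogonal decomposition $\mathcal H=\ker(T-\lambda)\oplus M$ with $(T-\lambda)|_M$ bounded below when $\lambda\notin\sigma_{\mathrm{ess}}(T)$. This requires carefully separating the (possible) isolated finite-multiplicity eigenvalue at $\lambda$ from the rest of the spectrum using the spectral projection of a small punctured interval, and checking that the operator restricted to the complementary reducing subspace is invertible with bounded inverse. Everything else is routine measure-theoretic manipulation with the spectral resolution. Since the paper simply cites~\cite{CEsar} for this lemma, it would be acceptable to present the argument in compressed form, emphasizing the spectral-projection dichotomy and deferring standard details.
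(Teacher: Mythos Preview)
Your argument is correct and follows the standard spectral-theoretic route. Note, however, that the paper does not actually prove Lemma~\ref{5.1}: it merely recalls the statement and cites Theorem~11.2.7 of~\cite{CEsar}. So there is no ``paper's own proof'' to compare against; you have supplied a self-contained justification where the paper is content with a reference.

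A few minor remarks on your write-up. In the ``if'' direction, the decomposition $\mathcal H=\ker(T-\lambda)\oplus M$ with $(T-\lambda)|_{M}$ bounded below is precisely what the spectral projection $E(\{\lambda\})$ delivers when $\lambda$ is isolated in $\sigma(T)$: take $M=\mathrm{ran}\,E(\R\setminus\{\lambda\})$ and use that $\mathrm{dist}(\lambda,\sigma(T)\setminus\{\lambda\})>0$. You flag this as the ``main obstacle'' but it is in fact immediate once one invokes the spectral theorem, so you could state it with more confidence. In the ``only if'' direction, your parenthetical ``since $\lambda$ near a bounded spectral window'' is slightly garbled; the point is simply that any vector in the range of $E(B)$ for a bounded Borel set $B$ lies in $\mathrm{dom}\,T$ because $\int|\mu|^{2}\,\ud\|E(\mu)\psi\|^{2}$ is then finite. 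Otherwise the construction and the estimates are clean.
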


\begin{lemma}\label{ess}
For each $\alpha_0\in\mathbb R$,  $[-\alpha_{0}^{2},+\infty)\subset \sigma_{\mathrm{ess}}(-\Delta_{\alpha_{0}}^{\Omega_\e}).$
\end{lemma}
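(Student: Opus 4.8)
The strategy is to construct, for each fixed $\lambda \in [-\alpha_0^2, +\infty)$, a singular Weyl sequence for $-\Delta_{\alpha_0}^{\Omega_\e}$ at $\lambda$, and then invoke Lemma~\ref{5.1}. Since $\Omega_\e = \R\times I$ is a product domain and the boundary condition \eqref{R} involves only the transversal variable $y$ (with the constant coupling $\alpha_0$), the operator $-\Delta_{\alpha_0}^{\Omega_\e}$ decomposes as $-\pt_{xx}\otimes \mathbf 1 + \mathbf 1\otimes(-\Delta_{\alpha_0}^{I})$. By the analysis in Section~\ref{LR}, $-\Delta_{\alpha_0}^{I}$ has eigenvalue $\lambda_0 = -\alpha_0^2$ with normalized eigenfunction $\phi_0(y) = c\,e^{-\alpha_0 y}$. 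Writing $\lambda = -\alpha_0^2 + k^2$ for some $k \geq 0$, the natural generalized eigenfunction is the product $e^{ikx}\phi_0(y)$, which solves $-\Delta(e^{ikx}\phi_0) = \lambda\, e^{ikx}\phi_0$ and satisfies the boundary conditions, but is not in $\LL^2(\Omega_\e)$ because it is not decaying in $x$. So the plan is to localize it in $x$.

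First I would fix a cutoff function $\chi \in \CC_0^\infty(\R)$ with $\chi \equiv 1$ on $[-1,1]$ and $\mathrm{supp}\,\chi \subset [-2,2]$, and set
\begin{equation*}
\psi_n(x,y) = C_n\, \chi\!\left(\frac{x - x_n}{n}\right) e^{ikx}\,\phi_0(y),
\end{equation*}
where $x_n \to +\infty$ is chosen (say $x_n = 4n$) so that the supports are pairwise disjoint, and $C_n$ is a normalization constant. Each $\psi_n$ lies in $\mathrm{dom}(-\Delta_{\alpha_0}^{\Omega_\e})$: it is smooth, compactly supported in $x$, and inherits the boundary conditions \eqref{R} from $\phi_0$ (the $x$-cutoff does not affect the trace relations in $y$). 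A direct computation gives $\|\psi_n\|^2 = |C_n|^2\, n \int |\chi|^2\,\ud x \cdot \|\phi_0\|_{\LL^2(I)}^2$, so $C_n \sim n^{-1/2}$ achieves condition (1). The disjoint supports together with $\|\psi_n\| = 1$ force $\psi_n \stackrel{w}{\to} 0$, giving condition (2).

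The remaining point is condition (3). Applying $-\Delta - \lambda$ to $\psi_n$ and using $-\pt_{yy}\phi_0 = -\alpha_0^2\phi_0$ and $-\pt_{xx}(e^{ikx}) - k^2 e^{ikx} = 0$, all the ``main'' terms cancel, and one is left only with the terms where derivatives in $x$ hit the cutoff $\chi((x-x_n)/n)$: schematically, a term with $\chi'(\cdot)/n$ and a term with $\chi''(\cdot)/n^2$. Estimating the $\LL^2$ norm of these, the extra factors of $1/n$ (resp.\ $1/n^2$) together with the fact that the effective support still has length $O(n)$ and $C_n \sim n^{-1/2}$, yields $\|(-\Delta - \lambda)\psi_n\| = O(1/n) \to 0$. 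I expect this estimate to be the only real computation, and it is routine; the mild subtlety is just bookkeeping the powers of $n$ so that normalization and the cutoff derivatives balance correctly. This establishes $\lambda \in \sigma_{\mathrm{ess}}(-\Delta_{\alpha_0}^{\Omega_\e})$ for every $\lambda \geq -\alpha_0^2$, which is the claim.
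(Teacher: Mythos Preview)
Your proposal is correct and follows essentially the same approach as the paper: both exploit the tensor-product structure $-\Delta_{\alpha_0}^{\Omega_\e}=(-\Delta^{\R})\otimes\mathbf 1+\mathbf 1\otimes(-\Delta_{\alpha_0}^{I})$ and build the Weyl sequence as $\psi_n(x,y)=\phi_n(x)\phi_0(y)$ with $\phi_0$ the transversal ground state. The only difference is cosmetic: the paper simply invokes $\sigma_{\mathrm{ess}}(-\Delta^{\R})=[0,\infty)$ to obtain an abstract singular Weyl sequence $\{\phi_n\}$ for $-\Delta^{\R}$ at $t=\lambda+\alpha_0^2$, whereas you write that sequence down explicitly as $C_n\,\chi((x-x_n)/n)\,e^{ikx}$ and carry out the $O(1/n)$ estimate by hand.
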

\begin{proof}\,\,
Let $\lambda \in [\mu_{0},\infty)$ with $\mu_{0}=-\alpha_{0}^2.$ So,  one can write $\lambda=\mu_{0}+t,$ with $t\in[0,+\infty).$ 
We denote by $-\Delta^{\R}$ the  Laplacian operator in $\LL^{2}(\R).$ It is well known that $ \sigma_{\mathrm{ess}}(-\Delta^{\R})=[0,+\infty).$
Hence, there is a singular Weyl sequence $\{\phi_{n}\}_{n=1}^{+\infty}$ for $-\Delta^{\R}$ at $t.$ Define the sequence $\{\psi_{n}\}_{n=1}^{+\infty}$ as $\psi_{n}(x,y)=\phi_{n}(x)\phi_{0}(y)$ with $\phi_{0}=c(\alpha_0)e^{-\alpha_0 y}$ the eigenfunction (normalized) of $-\Delta_{\alpha_0}^{I},$ associated with the first eigenvalue $\mu_{0}=-\alpha_0^{2}.$ Note that $\{\psi_n\}_{n=1}^{\infty}\subset \mathrm{dom}\,(-\Delta_{\alpha_{0}}).$ It is easy to check that $\|\psi_{n}\|_{\LL^{2}(\Omega_\e)}=1,$ for each $n\in\N,$ and $\psi_ {n}\stackrel{w}{\longrightarrow}0,$ in $\LL^{2}(\Omega_\e),$ and also that $(-\Delta_{\alpha}^{\Omega_\e}-\lambda)\psi_{n}\to0,$ in $\LL^{2}(\Omega),$ since
$$(-\Delta_{\alpha_0}^{\Omega_\e}-\lambda)\psi_{n}=[(-\Delta^{\R}-t)\phi_{n}]\phi_{0}+[(-\Delta_{\alpha_0}^{I}-\mu_{0})\phi_{0}]\phi_{n}\;\longrightarrow \;0.$$
Hence $\{\psi_n\}_{n=1}^{\infty}$ is a singular Weyl sequence for $-\Delta_{\alpha_{0}}^{\Omega_\e}$ at $\lambda$ and, by Lemma~\ref{5.1}, 
$\lambda\in \sigma_{\mathrm{ess}}(-\Delta_{\alpha_{0}}^{\Omega_\e}).$
\end{proof}

\begin{proposition}\label{CPEss}
For each $\alpha_0\in\mathbb R$,  $\sigma_{\mathrm{ess}}(-\Delta_{\alpha_{0}}^{\Omega_\e})=[-\alpha_{0}^{2},\infty).$
\end{proposition}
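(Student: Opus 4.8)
The plan is to establish the reverse inclusion $\sigma_{\mathrm{ess}}(-\Delta_{\alpha_0}^{\Omega_\e})\subseteq[-\alpha_0^2,\infty)$, which combined with Lemma~\ref{ess} yields the claimed equality. First I would observe that $-\Delta_{\alpha_0}^{\Omega_\e}$ has a \emph{separated-variables} structure: since the boundary coupling $\alpha_0$ is constant, the operator decomposes as a tensor sum $-\Delta_{\alpha_0}^{\Omega_\e}=\big(-\Delta^{\R}\big)\otimes \mathrm{I} + \mathrm{I}\otimes\big(-\Delta_{\alpha_0}^{I}\big)$ acting on $\LL^2(\R)\otimes\LL^2(I)$. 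This identification should be justified at the level of quadratic forms: the form $b_{\alpha_0}^{\Omega_\e}$ splits into the $x$-derivative part, which contributes the form of $-\Delta^{\R}$, and the $y$-derivative-plus-boundary part, which for a.e.\ fixed $x$ is exactly the transversal form $b_{\alpha_0}$ studied in Theorem~\ref{laplaciano1}.

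Next I would invoke the spectral mapping theorem for tensor sums of self-adjoint operators: for $A\otimes \mathrm I + \mathrm I\otimes B$ with $A,B$ self-adjoint and bounded below, one has $\sigma(A\otimes\mathrm I+\mathrm I\otimes B)=\overline{\sigma(A)+\sigma(B)}$. Here $\sigma(-\Delta^{\R})=[0,\infty)$, and by Theorem~\ref{autofun} the transversal operator has purely discrete spectrum $\{\lambda_n\}_{n=0}^{\infty}$ with $\lambda_0=-\alpha_0^2$ the smallest eigenvalue. Hence $\sigma(-\Delta_{\alpha_0}^{\Omega_\e})=\bigcup_{n\ge 0}\big([0,\infty)+\lambda_n\big)=[-\alpha_0^2,\infty)$. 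Since $[0,\infty)$ is purely essential, each summand $[\lambda_n,\infty)$ contributes only essential spectrum (a point $\mu_0+t$ built from $t\in\sigma_{\mathrm{ess}}(-\Delta^{\R})$ admits a singular Weyl sequence of product form, exactly as in the proof of Lemma~\ref{ess}), so in fact $\sigma(-\Delta_{\alpha_0}^{\Omega_\e})=\sigma_{\mathrm{ess}}(-\Delta_{\alpha_0}^{\Omega_\e})=[-\alpha_0^2,\infty)$, and there is no room left for anything above or below.

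An alternative, more self-contained route — and probably closer to what the authors intend given the reference to Neumann bracketing in the introduction — avoids the tensor-product machinery: bound $-\Delta_{\alpha_0}^{\Omega_\e}$ from below and above by decoupled operators on a partition $\R=\bigcup_k(k,k+1)$ of the longitudinal variable. Neumann bracketing in the $x$-variable gives $-\Delta_{\alpha_0}^{\Omega_\e}\ge \bigoplus_k\big(-\Delta_N^{(k,k+1)}\otimes\mathrm I+\mathrm I\otimes(-\Delta_{\alpha_0}^{I})\big)$, whose spectrum starts at $-\alpha_0^2$; this shows $\sigma(-\Delta_{\alpha_0}^{\Omega_\e})\subseteq[-\alpha_0^2,\infty)$. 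Combined with Lemma~\ref{ess} one concludes. The main obstacle is the rigorous justification of the variable-separation / bracketing step at the level of form domains: one must check that $H^1(\Omega_\e)$ is respected under the decomposition, that the transversal form for a.e.\ fixed $x$ really is $b_{\alpha_0}$ (using that traces on $\pt\Omega_\e$ restrict to boundary values of $\psi(x,\cdot)\in H^1(I)$ for a.e.\ $x$), and that the spectral mapping / bracketing conclusions apply with the operators being only semibounded rather than nonnegative. Everything else — normalization, weak convergence to zero of translated product sequences, and the limit $(-\Delta_{\alpha_0}^{\Omega_\e}-\lambda)\psi_n\to 0$ — is routine and already modeled in the proof of Lemma~\ref{ess}.
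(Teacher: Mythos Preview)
Your argument is correct but far more elaborate than what the paper actually does. The paper's proof is a two-line observation: Lemma~\ref{ess} gives $[-\alpha_0^2,\infty)\subset\sigma_{\mathrm{ess}}(-\Delta_{\alpha_0}^{\Omega_\e})$, and the quadratic-form lower bound $b_{\alpha_0}^{\Omega_\e}\ge -\alpha_0^2$ (this is precisely~\eqref{linf} with $\|\alpha\|_\infty^2=\alpha_0^2$) forces $\sigma(-\Delta_{\alpha_0}^{\Omega_\e})\subset[-\alpha_0^2,\infty)$, hence the reverse inclusion for the essential spectrum as well. No tensor decomposition, no spectral mapping for tensor sums, and no Neumann bracketing are needed.

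Your tensor-sum route is correct and in fact yields the stronger statement $\sigma(-\Delta_{\alpha_0}^{\Omega_\e})=\sigma_{\mathrm{ess}}(-\Delta_{\alpha_0}^{\Omega_\e})$, but at the cost of invoking or re-proving machinery (form-domain compatibility of the decomposition, the closure formula $\overline{\sigma(A)+\sigma(B)}$) that the paper avoids entirely. Your guess that the authors use Neumann bracketing is also off: the introduction attributes that method to the related paper~\cite{MJ}, while explicitly noting that \emph{here} the essential spectrum is located via an idea from~\cite{BK} --- and for the constant-$\alpha_0$ case that idea reduces to the elementary form bound already in hand. The takeaway is that you overlooked that~\eqref{linf}, specialized to constant $\alpha_0$, already gives exactly the reverse inclusion you are after.
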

\begin{proof}\,\, 
Indeed, by Lemma~\ref{ess} we have that $[-\alpha_{0}^{2},\infty)\subset \sigma_{\mathrm{ess}}(-\Delta_{\alpha_{0}}^{\Omega_\e}).$
On the other hand, we have the lower bound $b_{\alpha_{0}}^{\Omega_\e}\geq -\alpha_{0}^2,$ consequently $\sigma_{\mathrm{ess}}(-\Delta_{\alpha_{0}}^{\Omega_\e})\subset [-\alpha_{0}^{2},\infty).$ Therefore, the following holds
$$\sigma_{\mathrm{ess}}(-\Delta_{\alpha_0}^{\Omega_\e})=[-\alpha_{0}^{2},\infty).$$
\end{proof}

\begin{lemma}\label{PT-Symmetric Waveguides}
Let $\alpha_{0}\in\R$ and $\varphi\in\LL^2(\pt \Omega_\e)$. Then, there exists a positive constant~$C,$ depending on~$\e$ and $|\alpha_0|,$ such that any solution $\psi\in W^{2,2}(\Omega_\e)$ of the
boundary value problem
\begin{equation}\label{Aux}
\left \{
\begin{array}{cc}
(-\Delta-\lambda)\psi=0\quad\mbox{in}\quad\Omega_\e\\ 
-\dis\frac{\pt \psi}{\pt y}(x,0)- \alpha_{0} \psi(x,0)=\varphi(x,0)\\
\,\,\,\,\dis\frac{\pt \psi}{\pt y}(x,\epsilon)+  \alpha_{0} \psi(x,\epsilon)=\varphi(x,\e)
\end{array}
\right.,
\end{equation}
with any $\lambda <0,$ satisfies the estimate 
\begin{equation}\label{limited}
\|\psi\|_{1,2}\leq C \|\varphi\|_{\LL^{2}(\pt\Omega_\e)}.
\end{equation}
\end{lemma}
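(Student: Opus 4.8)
The plan is to establish the a priori bound \eqref{limited} by testing the variational formulation of \eqref{Aux} against $\psi$ itself and exploiting the strict negativity of $\lambda$ together with the spectral lower bound $b_{\alpha_0}^{\Omega_\e}\geq-\alpha_0^2$ obtained just before Theorem~\ref{alpha}. First I would rewrite \eqref{Aux} in weak form: multiplying $(-\Delta-\lambda)\psi=0$ by $\overline{\phi}$, integrating over $\Omega_\e$, integrating by parts in $y$, and inserting the two Robin-type boundary identities from \eqref{Aux}, one obtains, for every $\phi\in H^1(\Omega_\e)$,
\begin{equation}\label{weakAux}
b_{\alpha_0}^{\Omega_\e}(\phi,\psi)-\lambda(\phi,\psi)_{\LL^2(\Omega_\e)}=\int_{\R}\overline{\phi(x,\e)}\,\varphi(x,\e)\,\ud x+\int_{\R}\overline{\phi(x,0)}\,\varphi(x,0)\,\ud x.
\end{equation}
Taking $\phi=\psi$ and using $b_{\alpha_0}^{\Omega_\e}(\psi)\geq-\alpha_0^2\|\psi\|^2$ gives a lower bound $(-\alpha_0^2-\lambda)\|\psi\|^2\leq|\mathrm{RHS}|$; since $\lambda<0$ this is not yet coercive unless $\lambda<-\alpha_0^2$, so the key point is to recover control of $\|\nabla\psi\|$ as well. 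For that I would instead use the sharper inequality behind \eqref{close}, namely that $b_{\alpha_0}^{\Omega_\e}(\psi)\geq\tfrac12\|\pt_y\psi\|^2-2\alpha_0^2\|\psi\|^2$ (the two-dimensional analogue proven in Theorem~\ref{laplaciano1} and used in deriving \eqref{linf}), which together with $-\lambda\|\psi\|^2\geq0$ and $-\lambda\|\nabla\psi\|^2\geq-\lambda\|\pt_x\psi\|^2$ will let me absorb the full $H^1$ norm on the left after a Young-inequality split.

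Concretely, the main chain of estimates runs as follows. From \eqref{weakAux} with $\phi=\psi$, the real part gives
\[
\|\nabla\psi\|_{\LL^2(\Omega_\e)}^2+(-\lambda)\|\psi\|_{\LL^2(\Omega_\e)}^2\leq 2\alpha_0^2\|\psi\|^2_{\LL^2(\Omega_\e)}\cdot 0 + \Big|\int_{\pt\Omega_\e}\overline{\mathrm{tr}(\psi)}\,\varphi\Big|+|\text{boundary terms from }\alpha_0|,
\]
where I must be careful: the boundary term $\alpha_0(|\psi(x,\e)|^2-|\psi(x,0)|^2)$ coming from $b_{\alpha_0}^{\Omega_\e}$ needs to be dominated. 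Here I would invoke the standard trace inequality $\|\mathrm{tr}(u)\|_{\LL^2(\pt\Omega_\e)}^2\leq C_\e(\eta\|\nabla u\|^2+\eta^{-1}\|u\|^2)$ valid for all $\eta>0$ (this is exactly the one-dimensional estimate $|\phi(\e)|^2-|\phi(0)|^2\le a^2\|\phi'\|^2+a^{-2}\|\phi\|^2$ from Theorem~\ref{laplaciano1}, integrated in $x$), applied both to the $\alpha_0$-boundary term and, via Cauchy--Schwarz, to $\int_{\pt\Omega_\e}\overline{\mathrm{tr}(\psi)}\varphi\le\tfrac12\|\mathrm{tr}(\psi)\|^2_{\LL^2(\pt\Omega_\e)}\cdot\tau + \tfrac1{2\tau}\|\varphi\|^2_{\LL^2(\pt\Omega_\e)}$. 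Choosing $\eta$ and $\tau$ small enough that the $\|\nabla\psi\|^2$ contributions on the right are at most, say, $\tfrac12\|\nabla\psi\|^2$, I get
\[
\tfrac12\|\nabla\psi\|^2+(-\lambda)\|\psi\|^2\leq C_1\|\psi\|^2+C_2\|\varphi\|_{\LL^2(\pt\Omega_\e)}^2,
\]
with $C_1,C_2$ depending only on $\e$ and $|\alpha_0|$. The residual term $C_1\|\psi\|^2$ on the right is the obstacle to closing the estimate directly, since $-\lambda$ may be smaller than $C_1$.

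To overcome this I would use the already-established spectral information: $-\Delta_{\alpha_0}^{\Omega_\e}$ is self-adjoint (Theorem~\ref{alpha}) with $\sigma(-\Delta_{\alpha_0}^{\Omega_\e})\subset[-\alpha_0^2,\infty)$ (from $b_{\alpha_0}^{\Omega_\e}\geq-\alpha_0^2$, cf.\ Proposition~\ref{CPEss}), hence for $\lambda<0$ — more precisely, I should only need $\lambda$ in a fixed range, but the lemma claims all $\lambda<0$, so I will treat $\lambda\ge-\alpha_0^2$ and $\lambda<-\alpha_0^2$ separately. For $\lambda<-\alpha_0^2$, $\lambda$ lies in the resolvent set and the bound is immediate from boundedness of $(-\Delta_{\alpha_0}^{\Omega_\e}-\lambda)^{-1}$ composed with the bounded "Robin-to-solution" map, or more elementarily by noting $b_{\alpha_0}^{\Omega_\e}(\psi)-\lambda\|\psi\|^2\ge(-\alpha_0^2-\lambda)\|\psi\|^2$ is already coercive in $\|\psi\|$ and then re-running the split above to also control $\|\nabla\psi\|$. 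For $-\alpha_0^2\le\lambda<0$, I instead combine the displayed inequality with the Poincaré-type bound $b_{\alpha_0}^{\Omega_\e}(\psi)\ge\tfrac12\|\pt_y\psi\|^2-2\alpha_0^2\|\psi\|^2$ differently: subtract a large multiple of $\|\psi\|^2$ using that $\|\pt_y\psi\|^2\ge(\pi/\e)^2\|\psi\|^2$ fails in general but $\|\pt_y\psi\|^2+\|\psi\|^2\gtrsim\|\psi\|^2_{\LL^2}$ trivially; the cleanest route is to note that since the claimed constant $C$ is allowed to blow up as $\lambda\uparrow0$ is \emph{not} stated, I should in fact read the lemma as: $C$ depends on $\e$ and $|\alpha_0|$ but the estimate is used later only for $\lambda$ bounded away from $\sigma_{\mathrm{ess}}$, so I would prove it for $\lambda\le-\alpha_0^2-\delta_0$ for the relevant $\delta_0>0$ and remark that the general $\lambda<0$ case follows because $\lambda<0$ lies below the spectrum once $\lambda<-\alpha_0^2$ and the borderline strip $[-\alpha_0^2,0)$ can be absorbed by enlarging $C$ only if no eigenvalues intervene — which is guaranteed precisely by the lemma's hypothesis that a $W^{2,2}$ solution exists. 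The hard part, then, is the bookkeeping of the trace constants and the case $\lambda\in[-\alpha_0^2,0)$; I expect the resolvent argument for $\lambda<-\alpha_0^2$ to be routine and the boundary-trace Young-inequality juggling to be where all the care is needed.
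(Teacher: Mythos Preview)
Your opening is exactly the paper's proof: multiply $(-\Delta-\lambda)\psi=0$ by $\overline\psi$, integrate by parts, and obtain the energy identity
\[
\|\nabla\psi\|_{\LL^2(\Omega_\e)}^2+\alpha_0\!\int_{\pt\Omega_\e}\!|\psi|^2\nu_2\,\ud\sigma-\lambda\|\psi\|_{\LL^2(\Omega_\e)}^2=\int_{\pt\Omega_\e}\!\varphi\,\overline\psi\,\nu_2\,\ud\sigma,
\]
then bound the two boundary integrals by Young's inequality with a single small parameter~$t$ (using the one-dimensional identity $\int_{\pt\Omega_\e}|\psi|^2\nu_2=\int_{\Omega_\e}\pt_y|\psi|^2$ for the first and the trace embedding $H^1(\Omega_\e)\hookrightarrow\LL^2(\pt\Omega_\e)$ for the second). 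The paper collects everything into a single inequality $(1-t-\lambda-t|\alpha_0|^2-t\tilde C)\|\psi\|_{1,2}^2\le t^{-1}\|\varphi\|^2$ and closes by taking $t$ small. No case-split, no resolvent input.

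Your concern about the residual $C_1\|\psi\|^2$ term is in fact well founded: the paper's own final coefficient contains a slip (the first boundary estimate produces $t^{-1}|\alpha_0|^2$, not $t|\alpha_0|^2$), and with the corrected coefficient the argument only yields a positive left-hand side when $-\lambda$ exceeds an explicit threshold depending on $\e$ and $|\alpha_0|$ (equivalently, $C$ depends on $\lambda$). This is harmless downstream, since in Proposition~\ref{Compact} the lemma is invoked at one fixed $\lambda$ in the intersection of the resolvent sets, which may be chosen as negative as needed.

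Where your proposal stalls is the resolution. The case-split $\lambda\lessgtr-\alpha_0^2$, the appeal to the resolvent of $-\Delta_{\alpha_0}^{\Omega_\e}$, and the ``borderline strip'' discussion are all unnecessary, and the last paragraph does not close: the mere existence of a $W^{2,2}$ solution says nothing about whether $\lambda$ lies in the spectrum, so that line of reasoning is circular. The clean route is simply to carry the Young-inequality computation you began to its end and accept either that $C=C(\e,|\alpha_0|,\lambda)$ or that the estimate holds for all $\lambda\le-\Lambda_0(\e,|\alpha_0|)$; both suffice for the use the paper makes of the lemma.
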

\begin{proof}\,\,
Multiplying the first equation of \eqref{Aux} by $\overline{\psi}$ and integrating by parts, one can produce the identity
$$\int_{\Omega_\e}|\nabla\psi|^2\,\ud x\ud y+\alpha_{0}\int_{\pt \Omega_\e}|\psi|^{2}\nu_{2}\,\ud \sigma-\lambda\int_{\Omega_\e}|\psi|^{2}\,\ud x\ud y
=\int_{\pt \Omega_\e}\varphi\overline{\psi}\nu_{2}\,\ud \sigma,$$ where $\nu_2$ denotes the second component of the outward unit normal vector
to $\pt \Omega_\e.$ Using the Schwarz and Cauchy inequalities, recalling that $|\nu_2| = 1,$  and the embedding of $H^{1}(\Omega_\e)$ in $\LL^2(\pt \Omega_\e),$  we have, for $t\in(0,1),$
\begin{align*}
\left|\int_{\pt \Omega}\alpha_0|\psi|^{2}\nu_{2}\,\ud \sigma\right|&=\left |\int_{\Omega}\alpha_0\frac{\pt}{\pt y}|\psi|^{2}\,\ud x\ud y\right|
\leq 2|\alpha_0|\| \psi\|_{\LL^2(\Omega_\e)}\|\pt_{2}\psi\|_{\LL^{2}(\Omega_{\e})}\\
{}&\leq t^{-1}|\alpha_0|^{2}\|\psi\|_{\LL^2(\Omega_\e)}^{2}+t\|\nabla\psi\|_{\LL^{2}(\Omega_{\e})}^{2}
\leq t^{-1}|\alpha_0|^{2}\|\psi\|_{1,2}^{2}+t\|\nabla\psi\|_{\LL^{2}(\Omega_{\e})}^{2}\,,\\
\left|\int_{\pt \Omega}\varphi\overline{\psi}\nu_2\,\ud \sigma\right|&\leq 2\|\psi\|_{\LL^{2}(\pt\Omega_\e)}\|\varphi\|_{\LL^{2}(\pt\Omega_\e)}
\leq t^{-1}\|\varphi\|_{\LL^{2}(\pt\Omega_\e)}^{2}+t\|\psi\|_{\LL^{2}(\pt\Omega_{\e})}^{2}\\
{}&\leq t^{-1}\|\varphi\|_{\LL^{2}(\pt\Omega_\e)}^{2}+t\tilde{C}\|\psi\|_{1,2}^{2},
\end{align*}
where $\tilde{C}$ is the constant from the embedding of $H^{1}(\Omega_\e)$ in $\LL^{2}(\pt\Omega_\e).$ By the above estimates, we obtain
$$\big(1-t-\lambda-t|\alpha_0|^{2}-t\tilde{C}\big)\|\psi\|_{1,2}^{2}\leq t^{-1}\|\varphi\|_{\LL^{2}(\pt\Omega_\e)}^{2}.$$
The desired conclusion follows by choosing $t>0$ small enough so that the coefficient of $\|\psi\|_{1,2}^{2}$  becomes positive.
\end{proof}

\begin{proposition}\label{Compact}
Suppose that $\alpha\in W^{1,\,\infty}(\R).$ If $\dis\lim_{|x|\to+\infty}(\alpha(x)-\alpha_{0})=0$, then for each $\lambda\in \rho(-\Delta_{\alpha}^{\Omega_\e})\cap \rho(-\Delta_{\alpha_0}^{\Omega_\e})$ the operator $(-\Delta_{\alpha}^{\Omega_\e}-\lambda)^{-1}-(-\Delta_{\alpha_0}^{\Omega_\e}-\lambda)^{-1}$ is compact in $\LL^{2}(\Omega_\e).$ 
\end{proposition}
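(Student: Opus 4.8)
The plan is to show compactness of the resolvent difference by writing it out explicitly via the second resolvent (Hilbert) identity and exploiting the decay of $\beta=\alpha-\alpha_0$ at infinity. First I would fix $\lambda\in\rho(-\Delta_\alpha^{\Omega_\e})\cap\rho(-\Delta_{\alpha_0}^{\Omega_\e})$; without loss of generality I may take $\lambda<-\|\alpha\|_\infty^2$ (hence $\lambda<0$, so Lemma~\ref{PT-Symmetric Waveguides} applies), since both resolvent operators are analytic in $\lambda$ on the common resolvent set and compactness is preserved under the analytic continuation argument (a norm-limit of compact operators is compact, and the difference depends continuously on $\lambda$). Set $R_\alpha=(-\Delta_\alpha^{\Omega_\e}-\lambda)^{-1}$ and $R_{\alpha_0}=(-\Delta_{\alpha_0}^{\Omega_\e}-\lambda)^{-1}$. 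For $f\in\LL^2(\Omega_\e)$ put $\psi=R_\alpha f$ and $\psi_0=R_{\alpha_0}f$; then $w\ii=\psi-\psi_0\in W^{2,2}(\Omega_\e)$ solves $(-\Delta-\lambda)w=0$ in $\Omega_\e$ together with the inhomogeneous Robin data coming from the mismatch of the boundary couplings, namely $-\pt_y w(x,0)-\alpha_0 w(x,0)=\beta(x)\psi(x,0)$ and $\pt_y w(x,\e)+\alpha_0 w(x,\e)=-\beta(x)\psi(x,\e)$, i.e. $\varphi=\mp\beta\,\mathrm{tr}(\psi)$ on the two components of $\pt\Omega_\e$.

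Next I would apply Lemma~\ref{PT-Symmetric Waveguides} to $w$ with this $\varphi$ to get
\[
\|R_\alpha f-R_{\alpha_0}f\|_{1,2}=\|w\|_{1,2}\le C\,\|\beta\,\mathrm{tr}(\psi)\|_{\LL^2(\pt\Omega_\e)}\le C\,\|\beta\|_\infty\,\|\mathrm{tr}(\psi)\|_{\LL^2(\pt\Omega_\e)}\le C'\|\beta\|_\infty\|f\|_{\LL^2(\Omega_\e)},
\]
using the boundedness of the trace map $H^1(\Omega_\e)\to\LL^2(\pt\Omega_\e)$ together with the bound $\|\psi\|_{1,2}=\|R_\alpha f\|_{1,2}\le c\|f\|$ (the resolvent maps $\LL^2$ boundedly into the form domain $H^1$, by the lower bound \eqref{linf} on $b_\alpha^{\Omega_\e}$). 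This already shows $R_\alpha-R_{\alpha_0}$ is bounded from $\LL^2(\Omega_\e)$ to $H^1(\Omega_\e)$. To upgrade to compactness I use the cut-off functions $\beta_m$ from \eqref{uniforme}: replacing $\beta$ by $\beta_m$ in the above construction and denoting by $K_m$ the corresponding operator (the resolvent difference for the pair with coupling $\beta_m$ supported in $|x|<m$), the same estimate with $\beta$ replaced by $\beta-\beta_m$ gives $\|(R_\alpha-R_{\alpha_0})-K_m\|\le C'\|\beta-\beta_m\|_\infty\to0$ as $m\to\infty$ (this is where \eqref{Con} enters, via the stated $\LL^\infty$-convergence $\beta_m\to\beta$). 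Hence it suffices to prove each $K_m$ is compact.

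For the compactness of $K_m$ I would argue that $K_m f=w_m$ is supported, up to the exponential tails of $w_m$, near the strip $\{|x|<m\}$, but more robustly: $K_m$ factors as $K_m=(R_\alpha-R_{\alpha_0,m})$ acting on $f$, whose range lies in $H^1(\Omega_\e)$ with the $H^1$-norm controlled by $\|\beta_m\,\mathrm{tr}(\psi)\|_{\LL^2(\pt\Omega_\e)}$, and since $\beta_m$ has compact support the relevant trace norm is $\|\mathrm{tr}(\psi)\|_{\LL^2(\pt\Omega_\e\cap\{|x|<m\})}$. On the bounded piece $\Omega_\e\cap\{|x|<m+1\}$ the embedding $H^1\hookrightarrow\LL^2$ is compact (and the trace into $\LL^2$ of that bounded piece is compact), so composing the bounded map $f\mapsto w_m\in H^1$ with a compact localization yields that $K_m$ is a norm-limit of finite-rank-approximable (hence compact) operators; more precisely, multiply by a cut-off $\chi_R$ in $x$, note $\chi_R K_m$ has range in $H^1$ of a bounded domain hence is compact, and $\|(1-\chi_R)K_m\|\to0$ as $R\to\infty$ because $w_m$ decays exponentially away from $\mathrm{supp}\,\beta_m$ (solutions of $(-\Delta-\lambda)w=0$ with $\lambda<0$ and zero boundary data outside $|x|<m$ decay like $e^{-\sqrt{|\lambda|}|x|}$, which follows from a standard Combes–Thomas / energy estimate on $\Omega_\e\cap\{|x|>m\}$). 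The main obstacle is this last point — making the localization-plus-decay argument for $K_m$ clean, i.e. justifying the exponential decay of $w_m$ outside the support of $\beta_m$ and thereby reducing to a genuinely compact embedding on a bounded subdomain; everything else is bookkeeping with the estimates already established.
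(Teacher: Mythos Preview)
Your setup through the application of Lemma~\ref{PT-Symmetric Waveguides} is correct and matches the paper: write $w=R_\alpha f-R_{\alpha_0}f$, observe it solves \eqref{Aux} with $\varphi=(\alpha_0-\alpha)\,\mathrm{tr}(R_\alpha f)$, and obtain $\|w\|_{1,2}\le C\|\beta\,\mathrm{tr}(R_\alpha f)\|_{\LL^2(\pt\Omega_\e)}$. The divergence from the paper---and the gap---is in how you pass from this estimate to compactness.

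Your approximation step does not work as written. You define $K_m$ as a \emph{resolvent difference} for the coupling $\alpha_0+\beta_m$, and then claim that ``the same estimate with $\beta$ replaced by $\beta-\beta_m$'' yields $\|(R_\alpha-R_{\alpha_0})-K_m\|\le C'\|\beta-\beta_m\|_\infty$. But the estimate you derived is not linear in $\beta$: replacing $\beta$ by $\beta-\beta_m$ in the inequality would correspond to bounding $R_{\alpha_0+(\beta-\beta_m)}-R_{\alpha_0}$, which is not $(R_\alpha-R_{\alpha_0})-K_m$. If instead $K_m=R_{\alpha_0+\beta_m}-R_{\alpha_0}$, then $(R_\alpha-R_{\alpha_0})-K_m=R_\alpha-R_{\alpha_0+\beta_m}$, and applying Lemma~\ref{PT-Symmetric Waveguides} to this difference gives boundary data $-\beta\,\mathrm{tr}(R_\alpha f)+\beta_m\,\mathrm{tr}(R_{\alpha_0+\beta_m}f)$, which is not $(\beta-\beta_m)$ times a single bounded quantity. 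There is also the technical issue that $\alpha_0+\beta_m\notin W^{1,\infty}(\R)$ (it jumps at $|x|=m$), so $R_{\alpha_0+\beta_m}$ is not covered by the framework of Theorem~\ref{alpha}. Finally, your compactness argument for $K_m$ via exponential decay of $w_m$ is a genuine additional piece of analysis that you correctly flag as the main obstacle.

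The paper sidesteps all of this by never introducing auxiliary resolvent operators. From the same estimate $\|\psi_j-\psi_k\|_{1,2}\le C\|\beta\,\mathrm{tr}\,R_\alpha(\phi_j-\phi_k)\|_{\LL^2(\pt\Omega_\e)}$ it observes that compactness of $R_\alpha-R_{\alpha_0}$ follows immediately from compactness of the single operator $\beta\,\mathrm{tr}\,R_\alpha:\LL^2(\Omega_\e)\to\LL^2(\pt\Omega_\e)$. For the latter, the approximation is by \emph{multiplication operators}: $\beta_m\,\mathrm{tr}\,R_\alpha\to\beta\,\mathrm{tr}\,R_\alpha$ in operator norm simply because $\|\beta_m-\beta\|_\infty\to0$ (no second resolvent enters). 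Each $\beta_m\,\mathrm{tr}\,R_\alpha$ is then compact because $R_\alpha$ maps boundedly into $H^1(\Omega_\e)$, restriction to the bounded rectangle $\Omega_m=(-m,m)\times(0,\e)$ is bounded, and the trace $H^1(\Omega_m)\to\LL^2(\pt\Omega_m)$ is compact by Rellich--Kondrachov; since $\beta_m$ is supported in $\{|x|<m\}$, this is all that is needed. No exponential decay, no auxiliary boundary couplings. Your ``main obstacle'' simply disappears once you factor through $\beta\,\mathrm{tr}\,R_\alpha$ rather than through a family of modified resolvents.
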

\begin{proof}\,\,
Due to the first resolvent identity, it is enough to prove the result for a negative~$\lambda$   in the intersection of the respective resolvent 
sets. Consider $\{\phi_{j}\}_{j=1}^{\infty} \subset \LL^{2}(\Omega_\e)$ bounded and let $\psi_j=(-\Delta_{\alpha}^{\Omega_\e}-\lambda)^{-1}\phi_j-(-\Delta_{\alpha_0}^{\Omega_\e}-\lambda)^{-1}\phi_j$; note that $\psi_j$ satisfies the first equation in \eqref{Aux}. Moreover, inserting $\psi_j$ into  the second or third equation  we obtain
$$\left(\frac{\pt}{\pt_{y}}+\alpha_{0}\right)\psi_j=\left(\frac{\pt}{\pt_{y}}+\alpha_{0}\right)\Big((-\Delta_{\alpha}^{\Omega_\e}-\lambda)^{-1}\phi_j-(-\Delta_{\alpha_0}^{\Omega_\e}-\lambda)^{-1}\phi_j\Big)=(\alpha_0-\alpha)\mathrm{tr}(-\Delta_{\alpha}^{\Omega_\e}-\lambda)^{-1}\phi_j,$$ so that we take now $\varphi=(\alpha_0-\alpha)\mathrm{tr}(-\Delta_{\alpha}^{\Omega_\e}-\lambda)^{-1}\phi_j,$ where  $\mathrm{tr}$ denotes the trace operator from $H^{1}(\Omega_\e)\supset \mathrm{dom}\,(-\Delta_{\alpha}^{\Omega_\e})$ to $\LL^{2}(\pt\Omega_\e).$
By Lemma~\ref{PT-Symmetric Waveguides}, we have
$$\|\psi_j-\psi_k\|_{1,2}\leq C\left\|\big((\alpha_{0}-\alpha)\mathrm{tr}(-\Delta_{\alpha}^{\Omega_\e}-\lambda)^{-1}\big)(\phi_{j}-\phi_{k})\right\|_{\LL^{2}(\pt\Omega_\e)}.$$
Under the assumption that $\beta \mathrm{tr}(-\Delta_{\alpha}^{\Omega_\e}-\lambda)^{-1}$ is a compact operator, it follows that the sequence $\{\psi_j\}_{j= 1}^{\infty}$ is precompact in the topology of $H^{1}(\Omega_\e),$ and with a help of the above inequality  one can establish  that  $(-\Delta_{\alpha}^{\Omega_\e}-\lambda)^{-1}-(-\Delta_{\alpha_0}^{\Omega_\e}-\lambda)^{-1}$ is a compact operator in $\LL^{2}(\Omega_\e).$ 
 
Let us verify the compactness  of the operator $\beta \mathrm{tr}(-\Delta_{\alpha}^{\Omega_\e}-\lambda)^{-1}.$ One can show that the sequence of operators $\beta_{m}\mathrm{tr}(-\Delta_{\alpha}^{\Omega_\e}-\lambda)^{-1}$ converges to $\beta \mathrm{tr}(-\Delta_{\alpha}^{\Omega_\e}-\lambda)^{-1}$ in norm, since $\|\beta_m-\beta\|_{\LL^{\infty}(\R)}\to0$ in $\LL^{\infty}(\R)$.
On the other hand, we shall prove that each operator $\beta_{m}\mathrm{tr}(-\Delta_{\alpha}^{\Omega_\e}-\lambda)^{-1}$ is compact.  Indeed, given $\{u_{n}\}_{n=1}^{\infty}$ bounded in $\LL^{2}(\Omega_\e)$ one has $v_n=(-\Delta_{\alpha}^{\Omega_\e}-\lambda)^{-1}u_n$ bounded in $H^1(\Omega_\e)$, then there exists a subsequence, which we still denote by $v_n$, and a function $v\in H^1(\Omega_\e)$ such that $v_n\to v$ weakly in $H^1(\Omega_\e).$ Since $H^{1}(\Omega_\e)$ is compactly embedded in $\LL^{2}(\Omega_m),$ where $\Omega_m=(-m,m)\times(0,\e) \subset\Omega_\e,$ due to the Rellich-Kondrachov theorem (see \cite{Ad}, Sec.\ VI.), then $v_n\to v$ in $\LL^{2}(\Omega_m).$ According to the definition of $\beta_m,$ we have
\begin{align*}
\|\beta_{m}\mathrm{tr}(v_{n})-\beta_{m}\mathrm{tr}(v_{l})\|_{\LL^{2}(\pt\Omega_\e)}&=\|\beta \big(\mathrm{tr}(v_{n})-\mathrm{tr}(v_{l})\big)\|_{\LL^2(\pt\Omega_m)}
\leq C_{m}\|\beta\|_{\infty}\|v_n-v_l\|_{\LL^{2}(\Omega_m)}. 
\end{align*}
It follows that $\beta_{m}\mathrm{tr}(v_{n})$ is a Cauchy sequence and thus $\lim_{n\to\infty} \beta_{m}\mathrm{tr}(v_{n})$ exists, for each positive integer~$m$.
Therefore, the operators $\beta_{m}\mathrm{tr}(-\Delta_{\alpha}^{\Omega_\e}-\lambda)^{-1}$ are compact.
\end{proof}

\begin{teo}\label{ess2}
Let $\alpha \in W^{1,\,\infty}(\R).$ If $\dis\lim_{|x|\to+\infty}(\alpha(x)-\alpha_{0})=0$,  then
$$\sigma_{\mathrm{ess}}(-\Delta_{\alpha}^{\Omega_{\e}})=[-\alpha_{0}^{2}, \infty).$$ 
\end{teo}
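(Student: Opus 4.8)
\textbf{Proof plan for Theorem~\ref{ess2}.}
The plan is to combine the two preceding results, Proposition~\ref{CPEss} and Proposition~\ref{Compact}, via the standard fact that the essential spectrum is stable under compact perturbations of the resolvent (Weyl's theorem). First I would fix a point $\lambda$ in the common resolvent set $\rho(-\Delta_{\alpha}^{\Omega_\e})\cap\rho(-\Delta_{\alpha_0}^{\Omega_\e})$; such a point exists since both operators are self-adjoint and bounded below (by $-\|\alpha\|_\infty^2$ and $-\alpha_0^2$ respectively), so any sufficiently negative real number lies in both resolvent sets. By Proposition~\ref{Compact}, the difference of resolvents
\[
(-\Delta_{\alpha}^{\Omega_\e}-\lambda)^{-1}-(-\Delta_{\alpha_0}^{\Omega_\e}-\lambda)^{-1}
\]
is a compact operator on $\LL^2(\Omega_\e)$.

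Next I would invoke Weyl's theorem on the invariance of the essential spectrum: if two self-adjoint operators have resolvents differing by a compact operator at one (hence every) point of the common resolvent set, then they have the same essential spectrum. Applying this to $-\Delta_{\alpha}^{\Omega_\e}$ and $-\Delta_{\alpha_0}^{\Omega_\e}$ gives
\[
\sigma_{\mathrm{ess}}(-\Delta_{\alpha}^{\Omega_\e})=\sigma_{\mathrm{ess}}(-\Delta_{\alpha_0}^{\Omega_\e}).
\]
Then Proposition~\ref{CPEss} identifies the right-hand side as $[-\alpha_0^2,\infty)$, which yields the claim. One may alternatively phrase the argument directly through the Weyl criterion (Lemma~\ref{5.1}): given a singular Weyl sequence for $-\Delta_{\alpha_0}^{\Omega_\e}$ at $\lambda_0\in[-\alpha_0^2,\infty)$, one transports it to a singular sequence for $-\Delta_{\alpha}^{\Omega_\e}$ using that $(\alpha-\alpha_0)\to0$ at infinity makes the difference term negligible along a sequence escaping to infinity, and conversely; but the resolvent-difference route is cleaner given that Proposition~\ref{Compact} is already in hand.

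The only point requiring a little care — and the main (mild) obstacle — is the justification that the common resolvent set is nonempty and that the "one point implies every point" step is legitimate; both follow from the first resolvent identity together with the lower semiboundedness already established in~\eqref{linf} and in Proposition~\ref{CPEss}, so no genuinely new work is needed. I would therefore keep the proof short: cite Proposition~\ref{Compact} for compactness of the resolvent difference, cite the invariance of $\sigma_{\mathrm{ess}}$ under such perturbations (e.g.\ \cite{CEsar}), and conclude with Proposition~\ref{CPEss}.
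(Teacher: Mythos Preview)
Your proposal is correct and matches the paper's own proof essentially line for line: the paper invokes Proposition~\ref{Compact} to obtain compactness of the resolvent difference, then cites Weyl's theorem (Theorem~XIII.14 in~\cite{RS}) to conclude that $-\Delta_{\alpha}^{\Omega_\e}$ and $-\Delta_{\alpha_0}^{\Omega_\e}$ share the same essential spectrum, which by Proposition~\ref{CPEss} is $[-\alpha_0^2,\infty)$. Your additional remarks about the nonemptiness of the common resolvent set and the alternative Weyl-sequence route are sound but not needed, as the paper simply omits them.
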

\begin{proof}\,\,
According to Proposition~\ref{Compact}, the operator $(-\Delta_{\alpha}^{\Omega_\e}-\lambda)^{-1}-(-\Delta_{\alpha_0}^{\Omega_\e}-\lambda)^{-1}$ is compact in $\LL^{2}(\Omega_\e);$ then, by Theorem~XIII.14 in~\cite{RS}, the essential spectrum of $-\Delta_{\alpha}^{\Omega_{\e}}$ and $-\Delta_{\alpha_0}^{\Omega_{\e}}$ are identical.
\end{proof}


\subsection{Existence of  discrete spectrum} 

Now,  based on~\cite{GJ,MJ,KK} and under appropriate conditions, we shall give a variational argument to conclude that $\sigma(-\Delta_{\alpha}^{\Omega_{\e}})\cap (-\infty, -\alpha_{0}^{2})\neq\emptyset$. This, together with  Theorem~\ref{ess}, implies that the spectrum below $-\alpha_{0}^{2}$  is nonempty and formed by isolated eigenvalues of finite multiplicity, i.e., $\sigma_{\mathrm{disc}}(-\Delta_{\alpha}^{\Omega_\e})\neq \emptyset.$ 

\

\begin{teo}\label{boundstate}
Suppose that $(\alpha(x)-\alpha_{0}) \in W^{1,\,\infty}(\R)$, with $\alpha_{0}>0$ $(\alpha_{0}<0)$. If, moreover, $(\alpha(x)-\alpha_{0})$ is integrable with
$$\int_{\R}(\alpha(x)-\alpha_0)\,\ud x>0\quad\left( \int_{\R}(\alpha(x)-\alpha_0)\,\ud x<0 \right)$$ and $\dis\lim_{|x|\to+\infty}(\alpha(x)-\alpha_{0})=0$, then 
$$\inf\sigma(-\Delta_{\alpha}^{\Omega_\e})<-\alpha_{0}^{2}.$$
\end{teo}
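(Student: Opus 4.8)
The plan is to exhibit an explicit trial function $\psi\in H^1(\Omega_\e)=\mathrm{dom}\,b_\alpha^{\Omega_\e}$ for which the Rayleigh quotient
$$
\frac{b_\alpha^{\Omega_\e}(\psi)}{\|\psi\|_{\LL^2(\Omega_\e)}^2}<-\alpha_0^2,
$$
which by the variational characterization of $\inf\sigma$ for the self-adjoint operator associated with $b_\alpha^{\Omega_\e}$ (Theorem~\ref{alpha}) gives $\inf\sigma(-\Delta_\alpha^{\Omega_\e})<-\alpha_0^2$. Following the scheme of~\cite{DE,MJ,GJ}, I would take $\psi(x,y)=\varphi_\sigma(x)\,\phi_0^{\alpha_0}(y)+\epsilon\,\sigma\,\chi(x,y)$, where $\phi_0^{\alpha_0}(y)=c(\alpha_0)e^{-\alpha_0 y}$ is the normalized first transversal eigenfunction of $-\Delta_{\alpha_0}^I$ with eigenvalue $-\alpha_0^2$, $\varphi_\sigma(x)=\varphi(\sigma x)$ is a slowly varying cutoff (say $\varphi\in\CC_0^\infty(\R)$, $\varphi\equiv1$ near $0$, $0\le\varphi\le1$) with small parameter $\sigma>0$, and $\chi$ is a suitable correction term supported where needed to handle the first-order contribution; one may first try $\chi\equiv0$ and only introduce it if the first variation does not already have the right sign.

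The key computation is to expand $b_\alpha^{\Omega_\e}(\psi)+\alpha_0^2\|\psi\|^2$ in powers of $\sigma$. Writing $b_\alpha^{\Omega_\e}=b_{\alpha_0}^{\Omega_\e}+V$, where $V(\psi)=\int_\R(\alpha(x)-\alpha_0)\big(|\psi(x,\e)|^2-|\psi(x,0)|^2\big)\,\ud x$, the unperturbed part satisfies $b_{\alpha_0}^{\Omega_\e}(\varphi_\sigma\phi_0^{\alpha_0})+\alpha_0^2\|\varphi_\sigma\phi_0^{\alpha_0}\|^2=\int_\R|\varphi_\sigma'(x)|^2\,\ud x\cdot\|\phi_0^{\alpha_0}\|_{\LL^2(I)}^2=O(\sigma)$ after the change of variables $x\mapsto x/\sigma$ (indeed it equals $\sigma\int|\varphi'|^2$). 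The perturbation term, using $\varphi_\sigma\equiv1$ on an interval growing like $1/\sigma$ and $|\phi_0^{\alpha_0}(\e)|^2-|\phi_0^{\alpha_0}(0)|^2=c(\alpha_0)^2(e^{-2\alpha_0\e}-1)$, contributes
$$
V(\varphi_\sigma\phi_0^{\alpha_0})=c(\alpha_0)^2\big(e^{-2\alpha_0\e}-1\big)\int_\R(\alpha(x)-\alpha_0)|\varphi_\sigma(x)|^2\,\ud x\;\longrightarrow\; c(\alpha_0)^2\big(e^{-2\alpha_0\e}-1\big)\int_\R(\alpha(x)-\alpha_0)\,\ud x
$$
as $\sigma\to0$, by dominated convergence using integrability of $\alpha-\alpha_0$. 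Since $e^{-2\alpha_0\e}-1<0$ when $\alpha_0>0$ and $>0$ when $\alpha_0<0$, the sign hypotheses on $\int_\R(\alpha-\alpha_0)\,\ud x$ are exactly what makes this limit strictly negative; meanwhile $\|\varphi_\sigma\phi_0^{\alpha_0}\|^2\to\infty$ like $1/\sigma$, so one must be slightly careful and instead track the numerator $b_\alpha^{\Omega_\e}(\psi)+\alpha_0^2\|\psi\|^2$, which behaves like $\sigma\int|\varphi'|^2 + \big(\text{negative constant}\big)+o(1)$ — for $\sigma$ small this is strictly negative, hence the Rayleigh quotient lies below $-\alpha_0^2$.

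The main obstacle is the competition between the two $O(\sigma)$-type effects: the kinetic penalty $\sigma\int_\R|\varphi'|^2$ from the longitudinal cutoff is positive and vanishes as $\sigma\to0$, while the potential gain from $V$ tends to a fixed negative constant; so once I confirm that $V(\varphi_\sigma\phi_0^{\alpha_0})$ genuinely converges to that negative constant (not merely $o(1)$) and that no hidden positive term of order $O(1)$ appears — in particular that the cross terms and the $H^1(\R)$ trace terms are all $O(\sigma)$ or better — the result follows by choosing $\sigma$ sufficiently small. If the naive choice $\chi\equiv0$ leaves an unwanted $O(\sigma^0)$ positive remainder (it should not, since $\phi_0^{\alpha_0}$ is an exact transversal eigenfunction and the boundary terms are already accounted for in $V$), one introduces the first-order corrector $\chi$ solving an inhomogeneous transversal problem to cancel it, exactly as in~\cite{DE}. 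I would also note that $\psi\in H^1(\Omega_\e)$ with compact $x$-support, so all integrals converge and $\psi$ is a legitimate element of the form domain; combined with $\sigma_{\mathrm{ess}}(-\Delta_\alpha^{\Omega_\e})=[-\alpha_0^2,\infty)$ from Theorem~\ref{ess2}, the strict inequality $\inf\sigma(-\Delta_\alpha^{\Omega_\e})<-\alpha_0^2$ indeed forces existence of discrete spectrum below the essential spectrum.
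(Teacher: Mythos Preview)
Your proposal is correct and follows essentially the same approach as the paper: a product trial function $\varphi_\sigma(x)\phi_0^{\alpha_0}(y)$ with a scaled cutoff $\varphi_\sigma(x)=\varphi(\sigma x)$ (the paper uses $\sigma=1/n$), the same decomposition of the shifted form into a kinetic penalty of order~$\sigma$ and a boundary perturbation converging by dominated convergence to $(|\phi_0(\e)|^2-|\phi_0(0)|^2)\int_\R(\alpha-\alpha_0)\,\ud x<0$. The corrector~$\chi$ you mention is indeed unnecessary---as you yourself anticipate, $\phi_0^{\alpha_0}$ being the exact transversal eigenfunction leaves no unwanted $O(1)$ remainder, and the paper proceeds with $\chi\equiv0$ throughout.
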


\begin{proof}\,\,
Following \cite{GJ}, we wish to obtain a trial function $\psi$ from the form domain of  $-\Delta_{\alpha}^{\Omega_\e}$ such that the quadratic form 
$Q_{\alpha}^{\Omega_\e}(\psi)<0,$ where 
$$Q_{\alpha}^{\Omega_\e}(\phi)=b_{\alpha}^{\Omega_\e}(\phi)+\alpha_{0}^{2}\|\phi\|_{2}^{2},\quad \mathrm{dom}\,Q_{\alpha}^{\Omega_\e}=\mathrm{dom}\,b_{\alpha}^{\Omega_\e}.$$ Let $\zeta$ be a cut-off function, that is,
we fix a function $\zeta\in \CC_{0}^{\infty}(\R),$ with $0\leq\zeta\leq 1,$ and $\zeta\equiv1$ on $(-1/4,1/4),$ $\zeta\equiv0$ on $\R\backslash(-1/2,1/2)$ and $\|\zeta\|_{2}=1.$ Given $\phi_0$ as defined in the proof of Lemma~\ref{ess}. Consider the sequence $\{u_{n}\}_{n=1}^{\infty}$ of functions  into $\mathrm{dom}\,b_{\alpha}^{\Omega_\e},$ defined by $u_{n}(x,y)=f_n(x)\phi_{0}(y)$ where $f_{n}(x)=\zeta(x/n).$  
By integration by parts and using the boundary conditions of $\phi_{0},$ we obtain
$$
Q_{\alpha}^{\Omega_\e}(u_n)=n^{-1}\|\zeta^{'}\|_{2}^{2}+\|f_n\|_{2}^{2}\int_{0}^{\e}(|\pt_{y}\phi_{0}|^2+\alpha_{0}^{2}|\phi_{0}|^2)\,\ud y
+\int_{\R}\alpha(x)|f_{n}|^{2}(|\phi_{0}(\e)|^2-|\phi_{0}(0)|^2)\,\ud x.
$$
Since
$$\int_{0}^{\e}\Big(|\pt_{y}\phi_{0}|^2+\alpha_{0}^{2}|\phi_{0}|^2\Big)\,\ud y=-\alpha_{0}(|\phi_{0}(\e)|^2-|\phi_{0}(0)|^2),$$
we have
$$
Q_{\alpha}^{\Omega_\e}(u_n)=n^{-1}\|\zeta^{'}\|_{2}^{2}+(|\phi_{0}(\e)|^2-|\phi_{0}(0)|^2)\int_{\R}\big(\alpha(x)-\alpha_{0}\big)|f_{n}|^{2}\,\ud x,
$$
since $|f_{n}(x)(\alpha(x)-\alpha_0)|\leq|\alpha(x)-\alpha_0|$ and $f_{n}(x)\rightarrow 1$ as $n\rightarrow\infty$, we can apply the Dominated Convergence Theorem, because $|\alpha(x)-\alpha_0|\in\LL^{1}(\R),$  to get
$$\lim_{ n\rightarrow\infty}Q_{\alpha}^{\Omega_\e}(u_n)=(|\phi_{0}(\e)|^{2}-|\phi_{0}(0)|^{2})\int_{\R}(\alpha(x)-\alpha_0)\,\ud x<0.$$ 
Then,  there exists some $u_{N}\in\mathrm{dom}\,b_{\alpha}^{\Omega_\e}$ such that $b_{\alpha}^{\Omega_\e}(u_{N})<-\alpha_{0}^{2}.$  
It follows that, by invoking  Rayleigh-Ritz Theorem, $\inf\sigma(-\Delta_{\alpha}^{\Omega_\e})<-\alpha_{0}^{2}.$ Consequently, $\sigma(-\Delta_{\alpha}^{\Omega_{\e}})\cap (-\infty, -\alpha_{0}^{2})\neq\emptyset.$
\end{proof}

\section*{Acknowledgement}
I thank C\'esar Rog\'egio de Oliveira for valuable discussions. With readiness  he spent time answering my questions during our long discussions which have helped to improve this work. This work was supported financially by CAPES (Brazil).



\begin{thebibliography}{12}
\bibitem{Ad} Adams R.A.: \textit{Sobolev spaces}, Academic Press, New York, 1975.




\bibitem{BK} Borisov D., Krej\v{c}i\v{r}\'ik D.: PT-\textit{symmetric waveguide}, Integral Equations and Operator Theory \textbf{62}, 489-515.



\bibitem{CDFK} \textrm{Chenaud, B., Duclos, P., Freitas, P., and Krej\v{c}i\v{r}ík, D.}: \textit{Geometrically induced discrete spectrum in
curved tubes}, Differential Geometry and its Applications \textbf{23} (2005) 95–105.



\bibitem{CEsar}  de Oliveira, C.R.:  \textit{Intermediate Spectral Theory and Quantum Dynamics}, BirkhŠuser, Basel (2009). 



\bibitem{CAL}  \textrm{de Oliveira, C.R., Rossini, A.F.}: \textit{Effective operators for nonhomogeneous Robin Laplacian in  thin two- and three-dimensional curved waveguides}. Submitted for publication.

\bibitem{CA}  \textrm{de Oliveira, C.R., Verri, A.A.}: \textit{On the spectrum and weakly effective operator for Dirichlet  Laplacian in thin deformed tubes}, J. Math. Anal. Appl. \textbf{381} (2011), 454--468.

\bibitem{DK} \textrm {Dittrich, J., and K\v{r}\'i\v{z}, J.}: \textit{Bound states in straight quantum waveguides with combined
boundary conditions,} Journal of Mathematical Physics \textbf{43}, 8 (2002), 3892–3915.

\bibitem{DE} \textrm {Duclos, P., Exner, P.}: \textit{Curvature-induced bound states in quantum waveguides in two and three dimensions}, Rev. Math. Phys. \textbf{7} (1995), 73--102.


\bibitem{E} Evans L.C.: \textit{Partial differential equations},  American Mathematical Society, Providence, 1998.

\bibitem{EMP} \textrm {Exner, P., Minakov, A., and Leonid Parnovski, L.}: \textit{Asymptotic eigenvalue estimates for a Robin
problem with a large parameter}, Portugal. Math. (N.S.) Portugaliae Mathematica
Vol. 71, Fasc. 2, 2014, 141–156 6 European Mathematical Society
DOI 10.4171/PM/1945.


\bibitem{ES} \textrm{Exner, P., and  \v{S}eba, P.}: \textit{Bound states in curved waveguides}, J. Math. Phys. \textbf{30} (1989), 25742580.



\bibitem{FK}  \textrm{Freitas, P., Krej\v{c}i\v{r}ík, D.}: \textit{Waveguides with combined Dirichlet and Robin boundary conditions}, Math. Phys. Anal. Geom. \textbf{9} (2006), 335--352.


\bibitem{FSR}  \textrm{Friedlander, F., Solomyak, M.}: \textit{On the spectrum of the Laplacian in a narrow  strip}, Israel J. Math. \textbf{170} (2009), 337-354.

\bibitem{FS}  \textrm{Friedlander, F., Solomyak, M.}: \textit{On the spectrum of the Laplacian in a narrow infinite strip}, Amer. Math. Soc. Transl. \textbf{225}
 (2008), 103-116.


\bibitem{GJ}  \textrm{Goldstone J., Jaffe R.L.}: \textit{Bound states in twisting tubes},
Phys. Rev. B, \textbf{45} (1992), 14100-14107.


\bibitem{MJ}  \textrm{J\'ilek, M.}: \textit {Straight quantum waveguide with Robin boundary conditions}, Symmetry, Integrability and Geometry: Methods and Applications. SIGMA \textbf{3} (2007), 108 (12 pages). 

\bibitem{Ka} Kato, T.: \textit{Perturbation theory for linear operators}, Springer-Verlag, Berlin, 1966. 




\bibitem{KBZ} \textrm{Krej\v{c}i\v{r}\'ik, D., Bíla, H.  and Znojil, M.}: \textit{Closed formula
the metric in the Hilbert space of a PT-symmetric model.}, J. Phys. A \textbf{39} (2006), 10143--10153.

\bibitem{KK}  \textrm{Krej\v{c}i\v{r}\'ik, D., K\v{r}\'i\v{z}, J.}: \textit{On the spectrum of curved planar waveguides},
Publ. RIMS Kyoto Univ. \textbf{41} (2005), 757-791.

\bibitem{DZ} \textrm {Krej\v{c}i\v{r}ík, D. and Zhiqin Lu}: \textit{Location of the essential spectrum in curved quantum layers},
J. Math. Phys. \textbf{55}, (2014) 083520.

\bibitem{PK} \textrm {Pankrashkin, K.}: \textit {On the asymptotics of the principal eigenvalue for a Robin problem wint  a large parameter in planar domains}, Nanosystems: PhyS, Chemistry, Math, 2013, 4 (4), P. 474–483.

\bibitem{RS} \textrm{Reed M., Simon B.}: \textit{Methods of modern mathematical physics}, V. Analysis of operators, Academic Press,
New York, 1978. 


 


\end{thebibliography}
\end{document}